\def\Cov{{\rm Cov\,}}
\newcommand{\field}[1]{\mathbb{#1}}
\newcommand{\R}{\field{R}}
\newcommand{\Var}{{\rm Var}}
\newcommand{\F}{{\mathscr{F}}}
\newcommand{\B}{{\mathscr B}}
\newcommand{\cS}{{\mathcal S}}
\newcommand{\goto}{{\longrightarrow}}
\def\cS{{\field S}}
\def\E{{\mathbb{ E}}}
\def\P{{\mathbb{P}}}
\def\F{{\mathscr{F}}}
\def\paref#1{(\ref{#1})}
\def\tfrac#1#2{{\textstyle\frac {#1}{#2}}}
\newtheorem{theorem}{Theorem}[section]
\newtheorem{rema}[theorem]{Remark}
\numberwithin{equation}{section}
\newtheorem{cor}[theorem]{Corollary}
\newtheorem{lemma}[theorem]{Lemma}
\newtheorem{prop}[theorem]{Proposition}
\newtheorem{remark}[theorem]{Remark}
\begin{document}

\title{\textbf{\textsc{Stein-Malliavin Approximations for Nonlinear
Functionals of Random Eigenfunctions on ${\mathbb{S}}^{d}$ \thanks{We are deeply grateful
to Igor Wigman for many insightful comments and
suggestions on an earlier draft; our paper exploits several ideas from his
publications, as well as many general results by Ivan Nourdin and Giovanni
Peccati on the Stein-Malliavin approach. Usual disclaimers apply. }
}\ }}
\author{Domenico Marinucci{\textbf{\thanks{%
Research Supported by ERC Grant 277742 \textit{Pascal}}} \text{ } and \text{ } Maurizia Rossi\footnotemark[2]} \\
\textit{Department of Mathematics, University of Rome Tor Vergata,
Italy}
}
\date{}
\maketitle

\begin{abstract}
We investigate Stein-Malliavin approximations for nonlinear functionals of
geometric interest of Gaussian random eigenfunctions on the unit $d$%
-dimensional sphere ${\mathbb{S}}^{d},$ $d\geq 2.$ All our results
are established in the high energy limit, i.e. for eigenfunctions
corresponding to growing eigenvalues. More precisely, we provide
an asymptotic analysis for the variance of random eigenfunctions,
and also establish rates of convergence for various probability
metrics for Hermite subordinated processes, arbitrary polynomials
of finite order and square integral nonlinear transforms; the
latter, for instance, allows to prove a quantitative Central Limit
Theorem for the excursion area. Some related issues were already
considered in the literature for the $2$-dimensional case
${\mathbb{S}}^{2}$; our results are new or improve the existing
bounds even for this special case. Proofs are based on the
asymptotic analysis of moments of all order for Gegenbauer
polynomials, and make extensive use of the recent literature on
so-called fourth-moment theorems by Nourdin and Peccati.

\begin{itemize}
\item \textbf{Keywords and Phrases: }Spherical Harmonics, Gaussian
Eigenfunctions, High Energy Asymptotics, Stein-Malliavin Approximations,
Excursion Area

\item \textbf{AMS Classification: }60G60; 42C10, 60D05, 60B10\smallskip
\end{itemize}
\end{abstract}

\section{Introduction}

The characterization of the asymptotic behaviour (in the high energy limit,
i.e. for eigenfunctions with growing eigenvalues) of geometric functionals
of Gaussian random eigenfunctions on compact manifolds is a topic which has
recently drawn considerable attention. For instance, a growing literature
has focussed on the investigation of the asymptotic behaviour of nodal
lines, i.e. the zero sets of eigenfunctions in some random setups, or the
geometry of nodal domains; in particular, much effort has been devoted to
the $d$-dimensional torus $\mathbb{T}^{d}$ and the unit sphere $\mathbb{S}%
^{d}\subseteq \mathbb{R}^{d+1}$ (see \cite{bogomolnyschmit}, \cite{BGS}%
, \cite{granville}, \cite{khrishnapur}, \cite{Wig1}, \cite{Wig2} e.g.). Many of
these papers have considered the computation of asymptotic variances in the
high energy limit; Central Limit Theorem results have also been established,
for instance for the so-called Defect in the two-dimensional case of the
sphere ${\mathbb{S}}^{2}$ \cite{MaWi12}.

This stream of literature has been largely motivated by applications from
Mathematical Physics. In particular, according to Berry's Universality
conjecture \cite{Berry 1977}, random Gaussian monochromatic waves (similar
to e.g. random Gaussian spherical harmonics) could model deterministic
eigenfunctions on a ``generic" manifold with or without boundary; this
heuristic has strongly motivated the analysis of nodal sets of the former.
On the other hand, it is also well-known that random eigenfunctions are the
Fourier components of square integrable isotropic fields on manifolds. In
view of this and in light of the importance of spherical random fields in
astrophysics and cosmology, the analysis of polynomial transforms or
geometric functionals of random spherical harmonics is a major thread in
these disciplines; these results are used for testing the adequacy of
theoretical models to capture geometric features of observed data (for
instance on Cosmic Microwave Background radiation, see \cite{lewis}, \cite%
{matsubara} or the monograph \cite{MaPeCUP}).

A CLT by itself can often provide little guidance to the
actual distribution of random functionals, as it is only an asymptotic
result with no information on the speed of convergence to the limiting
distribution. More refined results indeed aim at the investigation of the
asymptotic behaviour for various probability metrics, such as Kolmogorov,
Total Variation and Wasserstein distances (to be defined below). In this
respect, a major development in the last few years has been provided by the
so-called \emph{fourth-moments literature, }which is summarized in the
recent monograph \cite{noupebook}. In short, a rapidly growing family of
results is showing how it is possible to establish sharp bounds on
probability distances between multiple stochastic integrals and the standard
Gaussian distribution by means of the analysis of the fourth-moments/fourth
cumulants alone. Such results are currently being generalized in several
directions, including Poisson processes, free probability, random matrices,
Markov subordinators and information theory (see \cite{simon}, \cite{ledoux}%
, \cite{NoPeJFA}, \cite{nourdinpeccatifree}, \cite{swan} e.g.); in
the present paper, we will stick to Gaussian subordinated
circumstances, as described in \S $1.1$.

\subsection{Main results}

\label{main results}

Let us first fix some notation: for any two positive sequence $a_{n},b_{n}$,
we shall write $a_{n}\sim b_{n}$ if $\lim_{n\rightarrow \infty }\frac{a_{n}}{%
b_{n}}=1$ and $a_{n}\ll b_{n}$ or $a_{n}=O(b_{n})$ if the sequence $\frac{%
a_{n}}{b_{n}}$ is bounded. Moreover if $\lim_{n\rightarrow \infty } a_n = \lim_{n\rightarrow \infty } b_n =0$,
then  $a_n = o(b_n)$ if $\lim_{n\rightarrow \infty }\frac{a_{n}}{%
b_{n}}=0$.
Also, we write as usual $dx$ for the Lebesgue
measure on the unit $d$-dimensional sphere ${\mathbb{S}}^{d}\subseteq \R^{d+1}$, so that $%
\int_{{\mathbb{S}}^{d}}\,dx=\mu _{d}$ where $\mu _{d}:=\frac{2\pi ^{\frac{d+1%
}{2}}}{\Gamma \left( \frac{d+1}{2}\right) }$.
The triple $(\Omega, \F, \P)$ shall denote a probability space and $\E$ shall
stand for the expectation w.r.t $\P$; convergence (resp. equality) in law
shall be denoted by $\mathop{\rightarrow}^{\mathcal{L}}$ (resp. $\mathop{=}^{\mathcal{L}}$)
and finally, as usual, $\mathcal N(\mu, \sigma^2)$ shall stand for a Gaussian
random variable with mean $\mu$ and variance $\sigma^2$.

Now let $\Delta _{{\mathbb{S}}^{d}}$ $(d\geq 2)$ denote as usual the
spherical Laplacian operator on  $\mathbb{S}%
^{d}$ and  $\left( Y_{\ell ,m;d}\right) _{\ell ,m}$ the
orthonormal system of (real-valued) spherical harmonics, i.e. for $\ell \in
\mathbb{N}$ the set of eigenfunctions
\begin{equation*}
\Delta _{{\mathbb{S}}^{d}}Y_{\ell ,m;d}=-\ell (\ell +d-1)Y_{\ell ,m;d}\
,\quad m=1,2,\dots ,n_{\ell ;d}\ .
\end{equation*}%
As well-known, the spherical harmonics $\left( Y_{\ell ,m;d}\right)
_{m=1}^{n_{\ell ;d}}$ represent a family of linearly independent homogeneous
polynomials of degree $\ell $ in $d+1$ variables restricted to ${\mathbb{S}}%
^{d}$ of size
\begin{equation*}
n_{\ell ;d}:=\frac{2\ell +d-1}{\ell }{\binom{\ell +d-2}{\ell -1}}\ \sim \
\frac{2}{(d-1)!}\ell ^{d-1}\text{ ,}\quad \text{as}\ \ell \rightarrow
+\infty \ ,
\end{equation*}%
see e.g. \cite{andrews} for further details. It is then customary to
construct, for $\ell \in \mathbb{N}$, the random eigenfunction $T_{\ell }$
on ${\mathbb{S}}^{d}$ by taking
\begin{equation}\label{Telle}
T_{\ell }(x):=\sum_{m=1}^{n_{\ell ;d}}a_{\ell ,m}Y_{\ell ,m;d}(x)\ ,\quad
x\in {\mathbb{S}}^{d}\ ,
\end{equation}%
with the coefficients $\left( a_{\ell ,m}\right) _{m=1}^{n_{\ell ;d}}$
Gaussian i.i.d. random variables, satisfying the relation
\begin{equation*}
{\mathbb{E}}[a_{\ell ,m}a_{\ell ,m^{\prime }}]=\frac{\mu _{d}}{n_{\ell ;d}}%
\delta _{m}^{m^{\prime }}\ ,
\end{equation*}%
where
$\delta _{a}^{b}$ denotes the Kronecker delta function and $\mu_d=\frac{2\pi ^{\frac{d+1%
}{2}}}{\Gamma \left( \frac{d+1}{2}\right) }$ the hypersurface volume of the $d$-dimensional unit
sphere as above.

It is then readily checked that $\left( T_{\ell }\right) _{\ell \in \mathbb{N%
}}$ represents a sequence of isotropic, mean-zero Gaussian random fields on $%
{\mathbb{S}}^{d}$, that is, for every fixed $\ell $ we have a collection of
random variables $\left( T_{\ell }(x)\right) _{x\in {\mathbb{S}}^{d}}$
indexed by the points of ${\mathbb{S}}^{d}$, such that the map
\begin{equation*}
T_{\ell }:\,\Omega \times {\mathbb{S}}^{d}{\longrightarrow }\mathbb{R}\text{
};\qquad (\omega ,x)\mapsto T_{\ell }(\omega ,x)
\end{equation*}%
is ${\mathscr{F}}\otimes {\mathscr B}({\mathbb{S}}^{d})$-measurable, where ${%
\mathscr B}({\mathbb{S}}^{d})$ denotes the Borel $\sigma $-field of ${%
\mathbb{S}}^{d}$. The isotropy of $T_{\ell}$ means that the probability laws of
the two random fields $T_{\ell }(\cdot )$ and $T_{\ell }^{g}(\cdot
):=T_{\ell }(g\,\cdot )$ are equal for every $g\in SO(d+1).$

It is also well-known that every Gaussian and isotropic random field $T$ on $%
\mathbb{S}^{d}$ is necessarily mean-square continuous (indeed this statement
holds for every isotropic and finite variance random field on a homogeneous
space of a compact group - see \cite{MaPe}) and satisfies in the  $%
L^{2}(\Omega \times {\mathbb{S}}^{d})$-sense the spectral representation
(see \cite{mal2013}, \cite{MaPeCUP} and also \cite{adlertaylor}, \cite%
{baldirossi})
\begin{equation*}
T(x)=\sum_{\ell =1}^{\infty }c_{\ell }T_{\ell }(x)\ ,\quad x\in {\mathbb{S}}%
^{d}\text{ ,}
\end{equation*}%
where $\mathbb{E}\left[ T^{2}\right] =\sum_{\ell =1}^{\infty }c_{\ell
}^{2}<\infty $; hence the spherical Gaussian eigenfunctions $\left( T_{\ell
}\right) _{\ell \in \mathbb{N}}$ can be viewed as the Fourier components of
the field $T$ (note that w.l.o.g. we are implicitly assuming that $T$ is
centred). Equivalently these random  eigenfunctions \paref{Telle}
could be defined
by their covariance function, which equals
\begin{equation}
{\mathbb{E}}[T_{\ell }(x)T_{\ell }(y)]=G_{\ell ;d}(\cos d(x,y))\ ,\quad
x,y\in {\mathbb{S}}^{d}\ .  \label{defT}
\end{equation}%
Here and in the sequel, $d(x,y)$ is the spherical distance between $x,y\in
\mathbb{S}^{d}$, and $G_{\ell ;d}:[-1,1]{\longrightarrow }\mathbb{R}$ is the
$\ell $-th Gegenbauer polynomial, i.e. $G_{\ell ;d}\equiv P_{\ell }^{(\frac{d%
}{2}-1,\frac{d}{2}-1)},$ where $P_{\ell }^{(\alpha ,\beta )}$ are the Jacobi
polynomials; as a special case, for $d=2$, it equals $G_{\ell ;2}\equiv
P_{\ell },$ the degree-$\ell $ Legendre polynomial. Throughout this paper,
we normalize so that $G_{\ell ;d}(1)=1$. Recall that the Jacobi polynomials $%
P_{\ell }^{(\alpha ,\beta )}$ are orthogonal on the interval $[-1,1]$ with
respect to the weight function $w(t)=(1-t)^{\alpha }(1+t)^{\beta }$ and
satisfy
\begin{equation*}
P_{\ell }^{(\alpha ,\beta )}(1)=\left(
\begin{array}{c}
\ell +\alpha \\
\ell%
\end{array}%
\right) \text{ ,}
\end{equation*}%
see \cite{szego} for more details.

The main purpose of this paper is to investigate quantitative CLTs
 for nonlinear functionals of Gaussian spherical eigenfunctions on ${%
\mathbb{S}}^{d}$. For $d=2$ this issue was addressed in \cite{MaWi12}; our
first aim is to extend their results to arbitrary dimensions and study the
asymptotic behavior, as $\ell \rightarrow \infty $, of the random variables $%
h_{\ell ;q,d}$ defined for $\ell =1,2,\dots $ and $q=0,1,\dots $ as
\begin{equation}
h_{\ell ;q,d}=\int_{{\mathbb{S}}^{d}}H_{q}(T_{\ell }(x))\,dx\ ,  \label{hq}
\end{equation}%
where $H_{q}$ represent the family of Hermite polynomials (\cite{noupebook},
\cite{peccatitaqqu}). The latter are defined as usual by $H_{0}\equiv 1$ and
for $q=1,2,\dots $
\begin{equation}
H_{q}(t)=(-1)^{q}\mathrm{e}^{\frac{t^{2}}{2}}\frac{d^{q}}{dt^{q}}\mathrm{e}%
^{-\frac{t^{2}}{2}}\ ,\ t\in \mathbb{R}\ .  \label{hermite}
\end{equation}
Note that, for all $d$
\begin{equation*}
h_{\ell ;0,d}=\mu _{d}\ ,\qquad h_{\ell ;1,d}=0
\end{equation*}%
a.s., and therefore it is enough to restrict our discussion to $q\geq 2$.
Moreover ${\mathbb{E}}[h_{\ell; q, d}]=0$ and
\begin{equation}
\mathrm{Var}[h_{\ell ;q,d}]=q!\mu _{d}\mu _{d-1}\int_{0}^{\pi }G_{\ell
;d}(\cos \vartheta )^{q}(\sin \vartheta )^{d-1}\,d\vartheta  \label{int var}
\end{equation}%
(see \S 3 for more details). Gegenbauer polynomials satisfy the symmetry
relationships
\begin{equation*}
G_{\ell ;d}(t)=(-1)^{\ell }G_{\ell ;d}(-t)\ ,
\end{equation*}%
whence the r.h.s. integral in (\ref{int var}) vanishes identically when both
$\ell $ and $q$ are odd; hence in these cases $h_{\ell ;q,d}= 0$ a.s. For
the remaining cases we have
\begin{equation}\label{int var doppio}
\mathrm{Var}[h_{\ell ;q,d}]=2q!\mu _{d}\mu _{d-1}\int_{0}^{\frac{\pi }{2}%
}G_{\ell ;d}(\cos \vartheta )^{q}(\sin \vartheta )^{d-1}\,d\vartheta \ .
\end{equation}%
Our first result, given in \S 3, is an upper bound for these variances,
asymptotic for $\ell \rightarrow \infty $.

\begin{prop}
\label{varianza} As $\ell \rightarrow \infty ,$ for $q,d\ge 3$,
\begin{equation}  \label{int1}
\int_{0}^{\frac{\pi }{2}}G_{\ell ;d}(\cos \vartheta )^{q}(\sin \vartheta
)^{d-1}\,d\vartheta =\frac{c_{q;d}}{\ell ^{d}}(1+o_{q;d}(1)).
\end{equation}
The constants $c_{q;d}$ are given by the formula
\begin{equation}
c_{q;d}=\left(2^{\frac{d}{2} - 1}\left (\frac{d}2-1 \right)!\right)^q\int_{0}^{+\infty
}J_{\frac{d}{2}-1}(\psi )^{q}\psi ^{-q\left( {\textstyle\frac{d}{2}}%
-1\right) +d-1}d\psi\ ,  \label{cq}
\end{equation}%
where $J_{\frac{d}{2}-1}$ is the Bessel function
of order $\frac{d}{2}-1$. The r.h.s. integral in (\ref{cq}) is absolutely convergent for any
pair $(d,q)\neq (3,3)$ and conditionally convergent for $d=q=3$.
\end{prop}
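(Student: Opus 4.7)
My plan is to substitute the classical Hilb-type asymptotic expansion for Jacobi polynomials (see Szeg\H{o}, Theorem 8.21.12 in \cite{szego}) into the integral \eqref{int1} and then rescale by $\psi = L\vartheta$. Writing $\alpha := d/2-1$ and $L := \ell + \alpha + 1/2$, and recalling that $G_{\ell;d} = P_\ell^{(\alpha,\alpha)}/P_\ell^{(\alpha,\alpha)}(1)$ with $P_\ell^{(\alpha,\alpha)}(1) = \binom{\ell+\alpha}{\ell} \sim \ell^{\alpha}/\Gamma(\alpha+1)$, Hilb's formula takes the form
\begin{equation*}
G_{\ell;d}(\cos\vartheta) = 2^{\alpha}\,\Gamma(\alpha+1)\left(\frac{\vartheta}{\sin\vartheta}\right)^{1/2}(\sin\vartheta)^{-\alpha}\,L^{-\alpha}\,J_\alpha(L\vartheta) + R_\ell(\vartheta),
\end{equation*}
uniformly for $\vartheta \in [c/\ell,\,\pi/2]$, with a remainder $R_\ell(\vartheta) = O(\ell^{-\alpha-3/2}\vartheta^{-\alpha-1/2})$ on the same range. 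Raised to the $q$-th power and multiplied by $(\sin\vartheta)^{d-1}$, the leading term is precisely the shape needed to match $c_{q;d}$ of \eqref{cq} once rescaled.

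The second step is to split $[0,\pi/2]$ into an inner layer $[0, A/\ell]$, a bulk $[A/\ell, \pi/2 - \delta]$, and a right boundary $[\pi/2-\delta, \pi/2]$, with $\delta > 0$ small and $A \to \infty$ after $\ell \to \infty$. On the inner layer the trivial bound $|G_{\ell;d}| \leq 1$ contributes $O((A/\ell)^d)$, which matches, in the limit, the integrable tail of the Bessel integrand near $0$ (where $J_\alpha(\psi) \sim (\psi/2)^{\alpha}/\Gamma(\alpha+1)$ produces an integrand of size $\psi^{d-1}$). On the right boundary, the standard Szeg\H{o} bound $|G_{\ell;d}(\cos\vartheta)| \leq C\,\ell^{-(d-1)/2}(\sin\vartheta)^{-(d-1)/2}$ yields a contribution of order $\ell^{-q(d-1)/2}$, which is $o(\ell^{-d})$ exactly when $q(d-1)/2 > d$; this holds for every $(d,q) \neq (3,3)$ with $d,q \geq 3$.

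The third step is the bulk: inserting Hilb into $(G_{\ell;d})^q$, expanding binomially and controlling the $R_\ell$ cross terms via the uniform estimate $|J_\alpha(L\vartheta)|(L\vartheta)^{-\alpha} \leq C$, one is left with the leading contribution
\begin{equation*}
\bigl(2^{\alpha}\Gamma(\alpha+1)\bigr)^q L^{-q\alpha}\int_{A/\ell}^{\pi/2-\delta}\left(\frac{\vartheta}{\sin\vartheta}\right)^{q/2}(\sin\vartheta)^{d-1-q\alpha}J_\alpha(L\vartheta)^q\,d\vartheta\,(1+o(1)).
\end{equation*}
The change of variables $\psi = L\vartheta$, together with $\sin\vartheta = \vartheta(1+o(1))$ on the bulk, transforms this into $\ell^{-d}(2^{\alpha}\Gamma(\alpha+1))^q \int_{A}^{L(\pi/2-\delta)} \psi^{d-1-q\alpha} J_\alpha(\psi)^q\,d\psi\,(1+o(1))$; letting $\ell\to\infty$ and then $A\to\infty$, $\delta\to 0$ recovers $c_{q;d}$.

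The main obstacle is to justify convergence of the limiting Bessel integral in \eqref{cq} and, in particular, the borderline case $d=q=3$. At $0$ the integrand behaves like $\psi^{d-1}$, so integrability near the origin is immediate; at infinity the expansion $J_\alpha(\psi) = \sqrt{2/(\pi\psi)}\cos(\psi - \alpha\pi/2 - \pi/4) + O(\psi^{-3/2})$ yields an integrand of size $\psi^{d-1-q(d-1)/2}\cos^q(\cdot)$, absolutely convergent iff $q(d-1)/2 > d$, i.e.\ for every $(d,q)\neq (3,3)$. In the exceptional case $d=q=3$ one has $\alpha = 1/2$ and $J_{1/2}(\psi) = \sqrt{2/(\pi\psi)}\sin\psi$, so the integrand reduces to a constant multiple of $\psi^{-1}\sin^3\psi$; linearising via $\sin^3\psi = \tfrac{3}{4}\sin\psi - \tfrac{1}{4}\sin(3\psi)$ and applying Dirichlet's test delivers only conditional convergence. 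This borderline case also forces sharper bookkeeping in the bulk-to-limit passage, since one cannot rely on absolute tail bounds when extending the truncated $\psi$-integral to infinity; this is the delicate part of the argument.
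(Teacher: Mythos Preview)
Your strategy --- Hilb's asymptotic for $P_\ell^{(\alpha,\alpha)}$ followed by the rescaling $\psi=L\vartheta$ --- is exactly the paper's. But two of your limiting steps do not go through as written.

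First, the assertion ``$\sin\vartheta=\vartheta(1+o(1))$ on the bulk'' is false: your bulk is $[A/\ell,\pi/2-\delta]$ with $\delta$ fixed as $\ell\to\infty$, so $\vartheta$ ranges up to $\pi/2-\delta$ where $\vartheta/\sin\vartheta$ is a fixed constant bounded away from $1$. The paper keeps this factor exact after rescaling, writes $(\psi/L)/\sin(\psi/L)=1+O(\psi^2/L^2)$ uniformly on $[0,L\pi/2]$, and then bounds the contribution of the $O(\psi^2/L^2)$ correction using $|J_\alpha(\psi)|\ll\psi^{-1/2}$; the resulting error is $O(L^{-2}+L^{-q(d-1)/2+d})$, which is $o(1)$ exactly when $q>2d/(d-1)$, i.e.\ for every $(d,q)\neq(3,3)$ with $d,q\ge3$. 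Second, bounding the inner layer $[0,A/\ell]$ by $|G_{\ell;d}|\le1$ gives only $\ell^d\times(\text{inner layer})=O(A^d)$, while the missing piece $\int_0^A J_\alpha^q\,\psi^{d-1-q\alpha}\,d\psi$ of the target integral is also of size $O(A^d)$; these two do not cancel, so letting $A\to\infty$ leaves an uncontrolled discrepancy. The paper sidesteps this by applying Hilb uniformly on all of $[0,\pi/2]$, using the two-regime remainder ($\delta(\vartheta)\ll\vartheta^{\alpha+2}\ell^{\alpha}$ on $[0,1/\ell]$ and $\delta(\vartheta)\ll\vartheta^{1/2}\ell^{-3/2}$ on $[1/\ell,\pi/2]$), so no separate inner or right boundary layer is needed.

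For $d=q=3$ you correctly identify the obstruction but give no argument. The paper's device is a second scale $K=K(L)\to\infty$ with $K=o(L)$: on $[K,L\pi/2]$ one integrates by parts against the bounded primitive $\int_0^T\sin^3 z\,dz$ to get $O(1/K)$, while on $[0,K]$ the expansion $(\psi/L)/\sin(\psi/L)=1+O(\psi^2/L^2)$ costs only $O(K^2/L^2)$; choosing $K=\sqrt{L}$ closes the argument.
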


It is well known that for $d\geq 2$, the second moment of the Gegenbauer
polynomials is given by
\begin{equation}
\int_{0}^{\pi }G_{\ell ;d}(\cos \vartheta )^{2}(\sin \vartheta
)^{d-1}\,d\vartheta =\frac{\mu _{d}}{\mu _{d-1}\,n_{\ell ;d}}\ ,
\label{momento 2}
\end{equation}%
whence
\begin{equation}
\mathrm{Var}[h_{\ell ;2,d}]=2\frac{\mu _{d}^{2}}{n_{\ell ;d}}\ \sim\  4\mu_d \mu_{d-1}\frac{c_{2;d}}{\ell ^{d-1}}\ ,\qquad \text{as}\ \ell \rightarrow
+\infty \ , \label{q=2}
\end{equation}%
where $c_{2;d} :=
\frac{(d-1)!\mu _{d}}{4 \mu_{d-1}}$.
For $d=2$ and every $q$, the asymptotic behaviour of these integrals was resolved in \cite%
{MaWi2}. In particular, it was shown that for $q=3$ or $q\geq 5$
\begin{equation}
\mathrm{Var}[h_{\ell ;q,2}]=(4\pi )^{2}q!\int_{0}^{\frac{\pi}{2}}P_{\ell }(\cos
\vartheta )^{q}\sin \vartheta \,d\vartheta =(4\pi )^{2}q!\frac{c_{q;2}}{\ell
^{2}}(1+o_{q}(1))\ ,  \label{int2}
\end{equation}%
where
\begin{equation}
c_{q;2}=\int_{0}^{+\infty }J_{0}(\psi )^{q}\psi \,d\psi \ ,  \label{cq2}
\end{equation}%
$J_{0}$ being the Bessel function of order $0$ and the above integral being
absolutely convergent for $q\ge 5$ and conditionally convergent for $%
q=3$. On the other hand, for $q=4$, as $\ell \rightarrow \infty $,
\begin{equation}
\mathrm{Var}[h_{\ell;4,2}]\ \sim \ 24^{2}\frac{\text{log}\ell }{\ell ^{2}}\
.  \label{q=4d=2}
\end{equation}%
Clearly for any $d,q\geq 2$, the constants $c_{q;d}$ are
nonnegative and it is obvious that $c_{q;d}>0$ for all even $q$.
We conjecture that this strict inequality holds for every $(d,q)$,
but leave this issue as an open question for future research;
also, in view of the previous discussion on the symmetry
properties of Gegenbauer polynomials, to simplify the discussion
in the sequel we restrict ourselves to even multipoles $\ell $.

In this paper we first establish quantitative CLTs for $%
h_{\ell ;q,d}$ (see \S 4) and then for other nonlinear functionals of
geometric interest (see \S 5,6). Our results are new for $d\geq 3;$ for $d=2$
we improve the existing bounds on probability metrics that were readily
established \cite{MaWi12}, and also extend this analysis to non-Hermite
polynomials, and establish Breuer-Major like results with surprisingly fast
convergence rates for generic nonlinear functionals, including e.g. the area
of excursion sets (see the discussion below for more details).

To formulate our results we need to introduce some more notation. Denote the
usual Kolmogorov $d_{K}$, Total Variation $d_{TV}$ and Wasserstein $d_{W}$
distances between random variables $Z,N$:
\begin{eqnarray*}
d_{K}(Z,N) &=&\sup_{z\in \mathbb{R}}\left\vert {\mathbb{P}}(Z\leq z)-{%
\mathbb{P}}(N\leq z)\right\vert \text{ ,}  \label{prob distance} \\
d_{TV}(Z,N) &=&\sup_{A\in {\B}(\mathbb{R})}\left\vert {\mathbb{P}}%
(Z\in A)-{\mathbb{P}}(N\in A)\right\vert \text{ ,} \\
d_{W}(Z,N) &=&\sup_{h\in \text{Lip}(1)}\left\vert {\mathbb{E}}[h(Z)]-{%
\mathbb{E}}[h(N)]\right\vert \text{ ,}
\end{eqnarray*}%
where ${\B}(\mathbb{R})$ denotes the Borel $\sigma $-field of $%
\mathbb{R}$ and Lip$(1)$ the set of Lipschitz functions whose Lipschitz
constant equals $1$. We shall prove the following result.

\begin{theorem}
\label{teo1} For all $d,q=2,3,\dots $,
$d_{\mathcal{D}}=d_{TV},d_{W},d_{K}$ we have%
\begin{equation*}
d_{\mathcal{D}}\left( \frac{h_{2\ell ;q,d}}{\sqrt{Var[h_{2\ell ;q,d}]}},%
\mathcal{N}(0,1)\right) =O(R(\ell ;q,d))\ ,  \label{Dbound}
\end{equation*}%
where for $d=2$
\begin{equation}
R(\ell ;q,2)=%
\begin{cases}
\ell ^{-\frac{1}{2}}\qquad & \quad q=2,3, \\
(\log \ell )^{-1}\qquad & \quad q=4, \\
(\log \ell ){\ell ^{-\frac{1}{4}}}\qquad & \quad q=5,6, \\
{\ell ^{-\frac{1}{4}}}\qquad & \quad q\geq 7;%
\end{cases}
\label{rate2}
\end{equation}%
and for $d=3,4,\dots $
\begin{equation}\label{rateq}
R(\ell ;q,d)=%
\begin{cases}
\ell ^{-\left( \frac{d-1}{2}\right) }\quad & \quad q=2, \\
\ell ^{-\left( \frac{d-5}{4}\right) }\quad & \quad q=3, \\
\ell ^{-\left( \frac{d-3}{4}\right) }\qquad & \quad q=4, \\
\ell ^{-\left( \frac{d-1}{4}\right) }\qquad & \quad q\geq 5.%
\end{cases}%
\end{equation}
\end{theorem}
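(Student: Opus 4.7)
The plan is to apply the Nourdin-Peccati fourth-moment theorem for random variables living in a fixed Wiener chaos. Since the coefficients $(a_{2\ell,m})_m$ are i.i.d.\ centred Gaussians, the field $T_{2\ell}$ generates a finite-dimensional isonormal Gaussian space, and each $H_q(T_{2\ell}(x))$ belongs to its $q$-th Wiener chaos; hence so does
$$h_{2\ell;q,d}=\int_{\mathbb{S}^d}H_q(T_{2\ell}(x))\,dx.$$
The quantitative CLT of Nourdin-Peccati (see Chapter 5 of \cite{noupebook}) then yields, simultaneously for $d_{\mathcal{D}}\in\{d_{TV},d_{K},d_{W}\}$ and every $q\geq 2$,
$$d_{\mathcal{D}}\!\left(\frac{h_{2\ell;q,d}}{\sqrt{\Var[h_{2\ell;q,d}]}},\,\mathcal{N}(0,1)\right)\leq C_{q}\sqrt{\frac{\mathrm{cum}_{4}(h_{2\ell;q,d})}{\Var[h_{2\ell;q,d}]^{2}}}\,,$$
where $\mathrm{cum}_{4}$ is the fourth cumulant. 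The task then reduces to sharply upper bounding $\mathrm{cum}_{4}(h_{2\ell;q,d})$ and combining the result with the variance asymptotics of Proposition \ref{varianza} and \paref{q=2}.

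The fourth cumulant is evaluated via the diagram formula for products of four Hermite polynomials (see \cite{peccatitaqqu}). Writing the fourth moment as an integral on $(\mathbb{S}^d)^{4}$ and subtracting the three disconnected pairings (which contribute exactly $3\Var[h_{2\ell;q,d}]^{2}$), one obtains
$$\mathrm{cum}_{4}(h_{2\ell;q,d})=\sum_{\Gamma}c_{\Gamma}\int_{(\mathbb{S}^d)^{4}}\prod_{1\leq i<j\leq 4}G_{2\ell;d}(\cos d(x_i,x_j))^{a_{ij}(\Gamma)}\,dx_1dx_2dx_3dx_4,$$
where $\Gamma$ ranges over the connected Feynman diagrams on four vertices of valence $q$, the edge multiplicities $a_{ij}(\Gamma)\geq 0$ satisfy $\sum_{j\neq i}a_{ij}(\Gamma)=q$ for every $i$, and the $c_{\Gamma}>0$ are combinatorial constants. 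Using $SO(d+1)$-invariance, one of the four integration variables can be fixed, reducing each term to a triple integral on the sphere.

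Each resulting triple integral is estimated by repeated applications of Hölder's or Cauchy-Schwarz's inequality along suitably chosen edges, reducing it to products of one-dimensional integrals $\int_{0}^{\pi}G_{2\ell;d}(\cos\vartheta)^{k}(\sin\vartheta)^{d-1}d\vartheta$ for $2\leq k\leq 2q$. For $d\geq 3$ and $k\geq 3$, Proposition \ref{varianza} provides the sharp bound $O(\ell^{-d})$; for $k=2$ the classical identity \paref{momento 2} gives $O(\ell^{-(d-1)})$; for $d=2$ we invoke the one-dimensional asymptotics of \cite{MaWi2}, in particular the logarithmic correction at the critical exponent $k=4$, which is exactly responsible for the $(\log\ell)^{-1}$ rate at $(d,q)=(2,4)$ in \paref{rate2}. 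Dividing by $\Var[h_{2\ell;q,d}]^{2}$ and extracting the square root then produces the rates $R(\ell;q,d)$ listed in \paref{rate2}--\paref{rateq}.

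The main technical obstacle is the combinatorial bookkeeping of the admissible connected diagrams: for each regime $(q,d)$ one must identify the diagram whose integral decays most slowly and apply Hölder's inequality only on the appropriate subset of edges (applying Cauchy-Schwarz uniformly is, in general, too crude and loses the exponent). Particularly delicate are the low-$q$ cases $q=2,3,4$, where few edge patterns are admissible and the parity properties of Gegenbauer polynomials play a role, and the critical pairs $(d,q)=(2,4)$ and $(3,3)$, where Proposition \ref{varianza} sits exactly at the boundary of absolute convergence and its sharp form is essential. The case $q=2$ is handled separately and more directly, since $h_{2\ell;2,d}$ already lives in the second chaos and $\mathrm{cum}_{4}$ reduces to a single integral of $G_{2\ell;d}^{\,4}$, which is controlled by Proposition \ref{varianza}.
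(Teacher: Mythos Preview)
Your proposal is correct and follows essentially the same Stein--Malliavin/fourth-moment strategy as the paper, but the paper organizes the argument through contraction norms rather than the full diagram expansion of $\mathrm{cum}_4$. Concretely, the paper represents $h_{\ell;q,d}=I_q(g_{\ell;q})$ and invokes the bound \eqref{casoparticolare}, so that only the $q-1$ contraction norms $\|g_{\ell;q}\otimes_r g_{\ell;q}\|^2=:\mathcal{K}_\ell(q;r)$ need to be estimated; Lemma \ref{contractions} shows these are exactly the \emph{circulant} four-point integrals
\[
\int_{(\mathbb{S}^d)^4} G_{\ell;d}^{\,r}(x_1,x_2)\,G_{\ell;d}^{\,q-r}(x_2,x_3)\,G_{\ell;d}^{\,r}(x_3,x_4)\,G_{\ell;d}^{\,q-r}(x_4,x_1)\,d\underline{x},
\]
which are then bounded in Propositions \ref{cum2} and \ref{cumd} by the same iterated Cauchy--Schwarz reductions you describe. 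Your route via $\mathrm{cum}_4$ and the diagram formula requires bounding \emph{all} connected diagrams on four valence-$q$ vertices, which is equivalent (the two quantities agree up to positive combinatorial constants for a fixed chaos) but entails more bookkeeping; the contraction formulation automatically filters down to the circulant terms. One small inaccuracy: for $q=2$ the fourth cumulant is the $4$-cycle integral $\int G\,G\,G\,G$ over $(\mathbb{S}^d)^4$, not a single $\int G^4$; the paper instead dispatches $q=2$ by the classical Berry--Esseen bound, since $h_{\ell;2,d}=\sum_m(a_{\ell,m}^2-\mathbb{E}a_{\ell,m}^2)$ is a sum of $n_{\ell;d}\sim\ell^{d-1}$ i.i.d.\ terms.
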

The following corollary is hence immediate.

\begin{cor}
\label{cor1} For all $q$ such that $(d,q)\neq (3,3), (3,4),(4,3),(5,3)$ and $%
c_{q;d}>0$, $d=2,3,\dots $,
\begin{equation}
\frac{h_{2\ell ;q,d}}{\sqrt{Var[h_{2\ell ;q,d}]}}\mathop{\goto}^{\mathcal{L}}%
\mathcal{N}(0,1)\ ,\qquad \text{as}\ \ell \rightarrow +\infty \ .
\label{convergenza hq}
\end{equation}
\end{cor}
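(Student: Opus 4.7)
The plan is to derive Corollary~\ref{cor1} as an almost immediate consequence of Theorem~\ref{teo1}. The key observation is that convergence to zero of the Kolmogorov distance $d_K(Z_\ell , N)$ between real random variables $Z_\ell$ and $N$ implies $Z_\ell \mathop{\goto}^{\mathcal L} N$ whenever the law of $N$ admits a continuous cdf, which is certainly the case for the standard Gaussian. Applying Theorem~\ref{teo1} with $d_{\mathcal D}=d_K$ thus reduces the corollary to checking two things: (i) that the rate $R(\ell ; q,d)$ vanishes as $\ell \to \infty$ for every pair $(d,q)$ not on the excluded list, and (ii) that the normalization $\sqrt{\Var[h_{2\ell ;q,d}]}$ is strictly positive for all sufficiently large $\ell$.

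For step (i) I would simply inspect the cases in \paref{rate2}--\paref{rateq}. In the two-dimensional regime $d=2$ each of the rates $\ell^{-1/2}$, $(\log\ell)^{-1}$, $(\log\ell)\ell^{-1/4}$ and $\ell^{-1/4}$ tends to zero, so no pair has to be excluded. In dimension $d\ge 3$ the only potentially problematic exponents are $-(d-5)/4$ at $q=3$ and $-(d-3)/4$ at $q=4$: the former is strictly negative exactly when $d\ge 6$, which forces the exclusion of $(3,3),(4,3),(5,3)$, whereas the latter is strictly negative exactly when $d\ge 4$, forcing the exclusion of $(3,4)$. These are precisely the four pairs listed in the statement; for every other $(d,q)$ with $d\ge 3$ the corresponding rate $\ell^{-(d-1)/2}$ (case $q=2$) or $\ell^{-(d-1)/4}$ (case $q\ge 5$) is eventually vanishing.

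For step (ii) I would invoke Proposition~\ref{varianza} together with \paref{int var doppio}: under the hypothesis $c_{q;d}>0$, the variance satisfies $\Var[h_{2\ell ;q,d}]\sim 2q!\,\mu_d\mu_{d-1}\,c_{q;d}\,(2\ell)^{-d}$ whenever $q,d\ge 3$, hence is strictly positive for $\ell$ large. The boundary cases $q=2$ (any $d$) and $d=2$ (any $q\ne 4$) are covered by the explicit asymptotics \paref{q=2} and \paref{int2}, while $d=2,q=4$ is covered by \paref{q=4d=2}; in each of these $\Var[h_{2\ell ;q,d}]$ is again asymptotically positive, so the ratio in \paref{convergenza hq} is well defined for all $\ell$ large enough. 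Letting $\ell \to \infty$ in the Kolmogorov bound supplied by Theorem~\ref{teo1} then yields the asserted weak convergence.

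I do not foresee any genuine obstacle: the heavy lifting has already been carried out in Proposition~\ref{varianza} and Theorem~\ref{teo1}, and the corollary is essentially a translation of those quantitative statements into the qualitative language of convergence in distribution. The only subtle point is of a combinatorial bookkeeping nature, namely verifying that the four pairs $(3,3),(3,4),(4,3),(5,3)$ in the exclusion list are exactly those $(d,q)$ for which the exponent in \paref{rateq} fails to be strictly positive.
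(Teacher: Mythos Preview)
Your proposal is correct and matches the paper's approach: the paper states the corollary as ``immediate'' from Theorem~\ref{teo1}, and what you have done is simply spell out the straightforward verification that $R(\ell;q,d)\to 0$ precisely for the pairs $(d,q)$ not on the exclusion list, together with the observation that $c_{q;d}>0$ guarantees nonvanishing variance via Proposition~\ref{varianza} and the explicit formulas \paref{q=2}, \paref{int2}, \paref{q=4d=2}. There is nothing more to add.
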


\begin{remark}
\textrm{For $d=2$, the CLT in (\ref{convergenza hq}) was already provided by
\cite{MaWi12}; nevertheless Theorem \ref{teo1} improves the existing bounds
on the speed of convergence to the asymptotic Gaussian distribution. More
precisely, for $d=2,q=2,3,4$ the same rate of convergence as in (\ref{rate2}%
) was given in their Proposition 3.4; however for arbitrary $q$ the total
variation rate was only shown to satisfy (up to logarithmic terms) $%
d_{TV}=O(\ell ^{-\delta _{q}}),$ where $\delta _{4}=\frac{1}{10},$ $\delta
_{5}=\frac{1}{7},$ and $\delta _{q}=\frac{q-6}{4q-6}<\frac{1}{4}$ for $q\geq
7$.}
\end{remark}

\begin{remark}
\textrm{The cases not included in Corollary \ref{cor1} correspond to the
pairs where $q=4$ and $d=3$, or $q=3$ and $d=3,4,5$; in these circumstances
the bounds we establish on fourth-order cumulants are not sufficient to
ensure that the CLT holds. Most probably, these four
special cases can be dealt with ad hoc arguments based on the explicit
evaluations of multiple integrals of spherical harmonics by means of
so-called Clebsch-Gordan coefficients, following the steps of Lemma 3.3 in
\cite{MaWi12}, see also \cite{M2008}, \cite{MaPeCUP}. Such computations,
however, seem of limited interest for the present paper, and we therefore
omit the investigation of these special cases for brevity's sake.}
\end{remark}
The random variables $h_{\ell;q,d}$ defined in (\ref{hq}) are the basic
building blocks for the analysis of any square integrable nonlinear transforms
of Gaussian spherical eigenfunctions on ${\mathbb{S}}^{d}$. Indeed, let us
consider generic polynomial functionals of the form
\begin{equation}
Z_{\ell }=\sum_{q=0}^{Q}b_{q}\int_{{\mathbb{S}}^{d}}
T_{\ell
}(x)^{q}\,dx\ ,\qquad Q\in \mathbb{N},\text{ }b_{q}\in \mathbb{R},
\label{Z}
\end{equation}%
which include, for instance, the so-called polyspectra of isotropic random
fields defined on ${\mathbb{S}}^{d}$. Note
\begin{equation}
Z_{\ell }=\sum_{q=0}^{Q}\beta _{q}h_{2\ell ;q,d}  \label{Z2}
\end{equation}%
for some $\beta _{q}\in \mathbb{R}$. It is easy to establish CLTs
 for generic polynomials (\ref{Z2}) from convergence results on
$ h_{2\ell ;q,d}$, see e.g. \cite{peccatitudor}. It is more
difficult to investigate the speed of convergence in the CLT in terms of the
probability metrics we introduced earlier; indeed, in \S 5 we establish the
following.

\begin{theorem}
\label{corollario1} As $\ell \rightarrow \infty ,$%
\begin{equation*}
d_{\mathcal{D}}\left( \frac{Z_{\ell }-{\mathbb{E}}[Z_{\ell }]}{\sqrt{\mathrm{%
Var}[Z_{\ell }]}},\mathcal{N}(0,1)\right) =O(R(Z_{\ell };d))\text{ ,}
\end{equation*}%
where $d_{\mathcal{D}}=d_{TV},d_{W},d_{K}$ and for $d=2,3,\dots $
\begin{equation*}
R(Z_{\ell };d)=%
\begin{cases}
\ell ^{-\left( \frac{d-1}{2}\right) }\qquad & \text{if}\quad \beta _{2}\neq
0\ , \\
\max_{q=3,\dots ,Q\,:\,\beta _{q},c_{q;d}\neq 0}R(\ell ;q,d)\qquad & \text{if%
}\quad \beta _{2}=0\ .%
\end{cases}%
\end{equation*}
\end{theorem}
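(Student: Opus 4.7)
Since $h_{2\ell;0,d}=\mu_d$ is constant and $h_{2\ell;1,d}=0$ almost surely, we may rewrite the centered functional as
\begin{equation*}
Z_{\ell}-\E[Z_{\ell}]=\sum_{q=2}^{Q}\beta_{q}\,h_{2\ell;q,d}.
\end{equation*}
The summands $h_{2\ell;q,d}$ belong to distinct Wiener chaoses (the $q$-th one, with respect to the Gaussian family generated by $T_{2\ell}$), hence are mutually orthogonal in $L^{2}(\P)$ and
\begin{equation*}
\sigma_{\ell}^{2}:=\Var[Z_{\ell}]=\sum_{q=2}^{Q}\beta_{q}^{2}\Var[h_{2\ell;q,d}].
\end{equation*}
Combining Proposition \ref{varianza} with \paref{q=2}, the $q=2$ summand is of order $\ell^{-(d-1)}$ whereas every summand with $q\geq 3$ and $c_{q;d}>0$ is of order $\ell^{-d}$ (with a possible logarithmic correction when $d=2$, $q=4$). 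Thus if $\beta_{2}\neq 0$ the variance is asymptotically driven by the $2$nd-chaos component, while if $\beta_{2}=0$ all $q$ with $\beta_{q}c_{q;d}\neq 0$ contribute comparably.

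The plan is to apply the Malliavin-Stein bound for sums of multiple Wiener-It\^o integrals (Theorem 5.2.6 in \cite{noupebook}), which gives
\begin{equation*}
d_{\D}\!\left(\tfrac{Z_{\ell}-\E[Z_{\ell}]}{\sigma_{\ell}},\,\mathcal{N}(0,1)\right)\leq C\sqrt{\Var(\Gamma_{\ell})},\qquad \Gamma_{\ell}=\langle D\tilde Z_{\ell},-DL^{-1}\tilde Z_{\ell}\rangle,
\end{equation*}
with $\tilde Z_{\ell}:=(Z_{\ell}-\E[Z_{\ell}])/\sigma_{\ell}$. Since $-L^{-1}$ acts as multiplication by $1/q$ on the $q$-th chaos,
\begin{equation*}
\Gamma_{\ell}=\sum_{q=2}^{Q}\frac{\beta_{q}^{2}}{q\sigma_{\ell}^{2}}\|Dh_{2\ell;q,d}\|^{2}+\sum_{q\neq q'}\frac{\beta_{q}\beta_{q'}}{q'\sigma_{\ell}^{2}}\langle Dh_{2\ell;q,d},Dh_{2\ell;q',d}\rangle,
\end{equation*}
and $\Var(\Gamma_{\ell})$ decomposes into diagonal fourth-moment-type contributions and off-diagonal cross-chaos contractions.

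Each diagonal contribution, up to the weight $w_{q}:=\beta_{q}^{2}\Var[h_{2\ell;q,d}]/\sigma_{\ell}^{2}$, is precisely the quantity that, via the single-chaos Malliavin-Stein inequality, produces the bounds $R(\ell;q,d)^{2}$ of Theorem \ref{teo1}. When $\beta_{2}\neq 0$, $w_{2}\to 1$ while $w_{q}=O(\ell^{-1})$ for $q\geq 3$, and a direct inspection of \paref{rate2}-\paref{rateq} shows that $w_{2}R(\ell;2,d)$ strictly dominates all other diagonal terms, giving the rate $\ell^{-(d-1)/2}$. When $\beta_{2}=0$, each relevant $w_{q}$ is bounded away from zero, so the diagonal part is $O(\max_{q\geq 3,\,\beta_{q}c_{q;d}\neq 0}R(\ell;q,d))$, matching the statement.

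The main technical obstacle is the control of the off-diagonal contractions $\E[\langle Dh_{2\ell;q,d},Dh_{2\ell;q',d}\rangle^{2}]$ for $q\neq q'$. Using the product formula for multiple Wiener-It\^o integrals, these reduce to integrals over ${\mathbb{S}}^{d}\times{\mathbb{S}}^{d}$ of products of the form $G_{2\ell;d}(\cos d(x,y))^{r}$ with $r=q+q'-2$, weighted by lower-order joint moments of $T_{2\ell}$. Applying the high-energy asymptotics underlying Proposition \ref{varianza} to such exponents $r$, one obtains bounds which, once renormalized by $\sigma_{\ell}^{4}$, are always of order no larger than the governing diagonal contribution, so the overall rate is indeed $R(Z_{\ell};d)$ as stated.
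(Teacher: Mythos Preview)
Your overall strategy is correct and matches the paper: bound $\Var(\Gamma_\ell)$ via Proposition \ref{BIGnourdinpeccati}, treat diagonal contributions using the single-chaos estimates behind Theorem \ref{teo1}, and control the cross terms separately. The diagonal analysis and the variance dichotomy $\beta_2\neq 0$ vs.\ $\beta_2=0$ are handled as in the paper.

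The genuine gap is in your treatment of the off-diagonal part. The cross-chaos contributions are \emph{not} two-point integrals of $G_{2\ell;d}^{q+q'-2}$; by Lemma \ref{contractions} (and the computation the paper carries out in \S 5) they are four-point integrals, and the decisive case is the ``full'' contraction $r=q_1\wedge q_2$. For $q_1<q_2$ this term equals
\[
\|g_{\ell;q_1}\otimes_{q_1}g_{\ell;q_2}\|^2
=\int_{(\mathbb S^d)^4}G_{\ell;d}^{\,q_1}(\cos d(x_1,x_2))\,G_{\ell;d}^{\,q_2-q_1}(\cos d(x_2,x_3))\,G_{\ell;d}^{\,q_1}(\cos d(x_3,x_4))\,d\underline{x},
\]
which \emph{factorizes} into a product of three one-variable integrals. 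This factorization (not Proposition \ref{varianza}) is what gives the needed bound: the middle factor $\int G_{\ell;d}^{\,q_2-q_1}$ vanishes when $q_2-q_1=1$ and is $O(\ell^{-(d-1)})$ otherwise, yielding $\|g_{\ell;q_1}\otimes_{q_1}g_{\ell;q_2}\|^2=O\!\big(\Var[h_{\ell;q_1,d}]^2\,\ell^{-(d-1)}\big)$. Without this step the argument fails: as the paper notes in Remark \ref{rem0}, the generic Nourdin--Peccati estimate for this same term gives only $O\!\big(\Var[h_{\ell;q_1,d}]\sqrt{\mathcal K_\ell(q_2;q_1)}\big)$, which after normalization by $\sigma_\ell^4$ does \emph{not} stay below the diagonal rate (for $d=2$, $\beta_2=0$ it produces $\ell^{-1/8}$ rather than $\ell^{-1/4}$, and can even diverge). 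So your final sentence (``bounds which \ldots\ are always of order no larger than the governing diagonal contribution'') is precisely the nontrivial point, and it requires this factorization/cancellation specific to spherical eigenfunctions, not just the moment asymptotics of Proposition \ref{varianza}.
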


All the results as above can be summarized as follows: for polynomials of
Hermite rank $2$ (i.e. their projection against $H_{2}(T_{\ell })$ $\beta
_{2}\neq 0,$ does not vanish), the asymptotic behaviour of $Z_{\ell }$ is
dominated by the term $h_{\ell ;2,d},$ whose variance is of order $\ell
^{-d+1}$ rather than $O(\ell ^{-d})$ as for the other terms. On the other
hand, when $\beta _{2}=0,$ the convergence rate to the asymptotic Gaussian
distribution for a generic polynomial is the slowest among the rates for the
Hermite components into which $Z_{\ell }$ can be decomposed.

The fact that the bound for generic polynomials is of the same order as for
the Hermite case (and not slower) is indeed rather unexpected; it can be
shown to be due to the cancellation of some cross-product terms, which are
dominating in the general Nourdin-Peccati framework, while they vanish in
the framework of spherical eigenfunctions of arbitrary dimension (see (\ref%
{eq=0}) and Remark \ref{rem0}). An inspection of our proof will reveal that
this result is a by-product of the orthogonality of eigenfunctions
corresponding to different eigenvalues; it is plausible that similar ideas
may be exploited in many related circumstances, for instance random
eigenfunction on generic compact manifolds.

\vspace{5mm}

Theorem \ref{corollario1} shows that the asymptotic behaviour of arbitrary
polynomials of Hermite rank $2$ is of particularly simple nature. Our result
below will show that this feature holds in much greater generality, at least
as far as the Wasserstein distance is concerned. Indeed, we shall consider
the case of functionals of the form
\begin{equation}
S_{\ell }(M)=\int_{{\mathbb{S}}^{d}}M(T_{\ell }(x))\,dx\ ,  \label{S}
\end{equation}%
where $M:\mathbb{R}\rightarrow \mathbb{R}$ is any square integrable,
measurable nonlinear function. It is well known that for such transforms the
following expansion holds in $L^{2}(\Omega )$-sense
\begin{equation}
M(T_{\ell })=\sum_{q=0}^{\infty }\frac{J_{q}(M)}{q!}H_{q}(T_{\ell }),\ \ \ {%
\mathbb{E}}[M(T_{\ell })^{2}]<\infty ,\ \ \ J_{q}(M):={\mathbb{E}}[M(T_{\ell
})H_{q}(T_{\ell })]\ .  \label{exp}
\end{equation}%
Therefore the asymptotic analysis, as $\ell \rightarrow \infty $, of $%
S_{\ell }(M)$ in (\ref{S}) directly follows from the Gaussian approximation
for $h_{\ell ;q,d}$ and their polynomial transforms $Z_{\ell }$. More
precisely, in \S 6 we prove the following result.

\begin{theorem}
\label{general} For functions $M$ in (\ref{S}) such that $\mathbb{E}\left[
M(Z)H_{2}(Z)\right] =$ $J_{2}(M)\neq 0$, we have
\begin{equation}
d_{W}\left( \frac{S_{2\ell }(M)-{\mathbb{E}}[S_{2\ell }(M)]}{\sqrt{%
Var[S_{2\ell }(M)]}},\mathcal{N}(0,1)\right) =O(\ell ^{-\frac{1}{2}}) \
,\qquad \text{as}\ \ell \rightarrow \infty\ ,  \label{sun2}
\end{equation}
in particular
\begin{equation}
\frac{S_{2\ell }(M)-{\mathbb{E}}[S_{2\ell }(M)]}{\sqrt{\mathrm{Var}[S_{2\ell
}(M)]}}\mathop{\goto}^{\mathcal{L}}\mathcal{N}(0,1)\ .  \label{sun1}
\end{equation}
\end{theorem}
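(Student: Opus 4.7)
The plan is to exploit the Hermite expansion (\ref{exp}) and reduce the Gaussian approximation for $S_{2\ell}(M)$ to the one for the second chaos component, which dominates everything asymptotically. Writing
\begin{equation*}
S_{2\ell}(M) - \E[S_{2\ell}(M)] = A_\ell + B_\ell, \qquad A_\ell := \frac{J_2(M)}{2}\, h_{2\ell;2,d}, \qquad B_\ell := \sum_{q\geq 3} \frac{J_q(M)}{q!}\, h_{2\ell;q,d},
\end{equation*}
the orthogonality of $\{h_{2\ell;q,d}\}_q$ in $L^2(\Omega)$ (they live in distinct Wiener chaoses) yields $\sigma_\ell^2 := \mathrm{Var}[S_{2\ell}(M)] = \tau_\ell^2 + \mathrm{Var}[B_\ell]$ with $\tau_\ell^2 := \mathrm{Var}[A_\ell] \asymp \ell^{-(d-1)}$ by (\ref{q=2}).

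The crux of the argument is to prove $\mathrm{Var}[B_\ell] = O(\ell^{-d})$, so that $\mathrm{Var}[B_\ell]/\tau_\ell^2 = O(\ell^{-1})$ (up to logarithmic factors coming from the exceptional pair $(d,q)=(2,4)$ cf.\ (\ref{q=4d=2})). The naive bound $|G_{2\ell;d}|^q \leq G_{2\ell;d}^2$ gives only $\mathrm{Var}[B_\ell]=O(\ell^{-(d-1)})$, the same order as $\tau_\ell^2$, so a sharper argument is needed. I would pick an even integer $2k$ large enough that Proposition \ref{varianza} applies to the pair $(d,2k)$ and that $c_{2k;d}$ is finite; then, since $\lVert G_{2\ell;d}\rVert_\infty \leq 1$, the pointwise majorisation $|G_{2\ell;d}|^q \leq G_{2\ell;d}^{2k}$ valid for $q\geq 2k$ together with Parseval gives
\begin{equation*}
\sum_{q \geq 2k} \frac{J_q(M)^2}{(q!)^2}\, \mathrm{Var}[h_{2\ell;q,d}] \;\leq\; \frac{C_k}{\ell^d} \sum_{q \geq 2k} \frac{J_q(M)^2}{q!} \;\leq\; \frac{C_k\, \E[M(T_{2\ell})^2]}{\ell^d}.
\end{equation*}
The finitely many intermediate terms $3\leq q<2k$ are controlled one by one by Proposition \ref{varianza} (or by ad hoc estimates in the exceptional pairs $(d,q)\in\{(3,3),(3,4),(4,3),(5,3)\}$ and for $(2,4)$), each contributing $O(\ell^{-d})$ up to logs.

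To finish, I apply the triangle inequality for $d_W$ and the Kantorovich--Rubinstein coupling $d_W(X,Y) \leq \E|X-Y|$:
\begin{equation*}
d_W\!\left( \frac{S_{2\ell}(M) - \E[S_{2\ell}(M)]}{\sigma_\ell},\, \mathcal{N}(0,1) \right) \;\leq\; d_W\!\left( \frac{A_\ell}{\tau_\ell},\, \mathcal{N}(0,1) \right) + \E\!\left| \frac{A_\ell+B_\ell}{\sigma_\ell} - \frac{A_\ell}{\tau_\ell} \right|.
\end{equation*}
The first summand is $O(\ell^{-(d-1)/2}) \leq O(\ell^{-1/2})$ by the $q=2$ case of Theorem \ref{teo1}. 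For the second, expand $A_\ell(1/\sigma_\ell - 1/\tau_\ell) + B_\ell/\sigma_\ell$, use $\E|A_\ell|\leq \tau_\ell$, $\E|B_\ell|\leq \sqrt{\mathrm{Var}(B_\ell)}$, and the elementary inequality $|1-\tau_\ell/\sigma_\ell|\leq \mathrm{Var}(B_\ell)/\sigma_\ell^2$ (from $1-\sqrt{1-x}\leq x$ with $x=\mathrm{Var}(B_\ell)/\sigma_\ell^2$); both resulting terms are bounded by $O(\sqrt{\mathrm{Var}(B_\ell)}/\sigma_\ell) = O(\ell^{-1/2})$.

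The hard part is the uniform control on the infinite tail of the Hermite series: $M \in L^2$ only gives $\sum_q J_q(M)^2/q! < \infty$, which on its own is too weak to produce any rate. The pointwise trick $|G|^q \leq G^{2k}$ together with Proposition \ref{varianza} is what lifts the variance bound on $B_\ell$ from $\ell^{-(d-1)}$ to $\ell^{-d}$ and thereby converts summability of the Hermite coefficients into the quantitative Wasserstein rate $\ell^{-1/2}$.
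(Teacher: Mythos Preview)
Your strategy coincides with the paper's: split off a dominant finite-chaos piece, control the tail in $L^2$, and combine via the triangle inequality for $d_W$. For $d\geq 3$ your decomposition $A_\ell+B_\ell$ is exactly the paper's, and your tail argument via $|G_{2\ell;d}|^q\le G_{2\ell;d}^{2k}$ together with Proposition~\ref{varianza} gives $\Var[B_\ell]=O(\ell^{-d})$ as needed. (Incidentally, the pairs $(3,3),(3,4),(4,3),(5,3)$ are not exceptional for the \emph{variance}: Proposition~\ref{varianza} covers all $q,d\ge 3$; those pairs are exceptional only for the CLT rate in Theorem~\ref{teo1}.)

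There is, however, a small gap for $d=2$. With only $q=2$ in $A_\ell$, the term $q=4$ sits in $B_\ell$ and by (\ref{q=4d=2}) contributes $\asymp \ell^{-2}\log\ell$ to $\Var[B_\ell]$ whenever $J_4(M)\ne 0$. Your final bound then reads
\[
\frac{\sqrt{\Var[B_\ell]}}{\sigma_\ell}=O\!\left(\ell^{-1/2}\sqrt{\log\ell}\right),
\]
which is slightly weaker than the stated $O(\ell^{-1/2})$. The paper removes this logarithm by splitting differently for $d=2$: it puts $q=2,3,4$ together into the finite part $S_\ell(M;1)$ and starts the tail at $q=5$, so that $\Var[S_\ell(M;2)]=O(\ell^{-2})$ with no log. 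The price is that the leading block is now a genuine degree-$4$ polynomial rather than a single second-chaos term, so one must invoke Theorem~\ref{corollario1} (with $\beta_2\ne 0$, giving rate $\ell^{-1/2}$) rather than just the $q=2$ case of Theorem~\ref{teo1}. If you adjust your split accordingly for $d=2$, your argument goes through verbatim and yields the sharp $O(\ell^{-1/2})$.
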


Theorem \ref{general} provides a Breuer-Major like result on nonlinear
functionals, in the high-frequency limit (compare for instance \cite%
{nourdinpodolskij}). While the CLT in \eqref{sun1} is somewhat expected, the
square-root speed of convergence \eqref{sun2} to the limiting distribution
 may be considered quite remarkable; it is mainly due to some specific
features in the chaos expansion of random eigenfunctions, which is dominated
by a single term at $q=2.$ Note that the function $M$ need not be smooth in
any meaningful sense; indeed our main motivating rationale here is the
analysis of the asymptotic behaviour of the empirical measure for excursion
sets, where $M(\cdot )=M_{z}(\cdot )=\mathbb{I}(\cdot \leq z)$ is the
indicator function of the interval $(-\infty, z]$. Therefore, in words, $%
S_{\ell }(z):=S_{\ell }(M_{z})$ is the (random) measure of an excursion set,
i.e. $T_{\ell }$ lies above a given level $z\in \mathbb{R}$; an application
of Theorem \ref{general} yields a quantitative CLT for $%
S_{\ell }(z)$, $z\neq 0$.

\section{Background}

\label{background}

In a number of recent papers summarized in the monograph \cite{noupebook}, a
beautiful connection has been established between Malliavin calculus and the
so-called Stein method to prove Berry-Esseen bounds and quantitative CLTs
 on functionals of Gaussian subordinated random fields. In
this section, we first briefly review some notation and the main results in
this area, which we shall deeply exploit in the sequel of the paper.

\subsection{Stein-Malliavin Normal approximations}

Let us consider the measure space $(X,\mathcal{X},\mu )$, where $X$ is a
Polish space, $\mathcal{X}$ is the $\sigma $-field on $X$ and $\mu $ is a
positive, $\sigma $-finite and non-atomic measure on $(X,\mathcal{X})$.
Denote $H=L^{2}(X,\mathcal{X},\mu )$ the real (separable) Hilbert space of
square integrable functions on $X$ w.r.t. $\mu $, with inner product $%
\langle f,g\rangle _{H}=\int_{X}f(x)g(x)\,d\mu (x)$. Let us recall the
construction of an isonormal Gaussian field on $H$. First consider a
Gaussian white noise on $X$, i.e. a centered Gaussian family $W$
\begin{equation*}
W=\{W(A):A\in \mathcal{X},\mu (A)<+\infty \}
\end{equation*}%
such that for $A,B\in \mathcal{X}$ of finite measure, we have
\begin{equation*}
{\mathbb{\ E}}[W(A)W(B)]=\int_{X}\mathbb{I}({A\cap B})\,d\mu \text{ .}
\end{equation*}%
We define a Gaussian random field $T$ on $H$ as follows. For each $f\in H$,
let
\begin{equation}
T(f)=\int_{X}f(x)\,dW(x)\ ,  \label{isonormal}
\end{equation}%
i.e. the Wiener-Ito integral of $f$ with respect to $W$. The random field $T$ is the
isonormal Gaussian field on $H$; indeed
\begin{equation*}
\mathrm{Cov\,}(T(f),T(g))=\langle f,g\rangle _{H}\ .
\end{equation*}%
Let us recall now the notion of Wiener chaoses. Define the space of
constants $C_{0}:=\mathbb{R}\subseteq L^{2}(\Omega )$, and for $q\geq 1$,
let $C_{q}$ be the closure in $L^{2}(\Omega )$ of the linear subspace
generated by random variables of the form
\begin{equation*}
H_{q}(T(f))\ ,\qquad f\in H,\ \Vert f\Vert _{H}=1\ ,
\end{equation*}%
where $H_{q}$ is the $q$-th Hermite polynomial (\ref{hermite}). $C_{q}$ is
called the $q$-th Wiener chaos. The following, well-known property will be
useful in the sequel: let $Z_{1},Z_{2}\sim \mathcal{N}(0,1)$ be jointly
Gaussian; then, for all $q_{1},q_{2}\geq 0$
\begin{equation}  \label{hermite orto}
{\mathbb{\ E}}[H_{q_{1}}(Z_{1})H_{q_{2}}(Z_{2})]=q_{1}!\,{\mathbb{\ E}}%
[Z_{1}Z_{2}]^{q_{1}}\,\delta _{q_{2}}^{q_{1}}\ .
\end{equation}
Moreover the following chaotic Wiener-Ito expansion holds:
\begin{equation*}
L^{2}(\Omega )=\bigoplus_{q=0}^{+\infty }C_{q}\ ,
\end{equation*}%
the above sum being orthogonal from (\ref{hermite orto}).
Equivalently, each random variable $F\in L^{2}(\Omega )$ admits a unique
decomposition in the $L^{2}(\Omega)$-sense of the form
\begin{equation}
F=\sum_{q=0}^{\infty }J_{q}(F)\ ,  \label{chaos exp}
\end{equation}%
where $J_{q}:L^{2}(\Omega ){\longrightarrow }C_{q}$ is the orthogonal
projection operator. Remark that $J_{0}(F)={\mathbb{\ E}}[F]$.

We denote by $H^{\otimes q}$ and $H^{\odot q}$ the $q$-th tensor product and
the $q$-th symmetric tensor product of $H$ respectively. In particular $%
H^{\otimes q} = L^2(X^{q}, \mathcal{X}^{q},\mu ^{q})$ and $H^{\odot
q}=L^2_s(X^{q}, \mathcal{X}^{q},\mu ^{q})$ where by $L^2_s$ we mean the
square integrable and symmetric functions. Note that for $(x_1,x_2,\dots,
x_q)\in X^q$ and $f\in H$, we have
\begin{equation*}
f^{\otimes q}(x_1,x_2,\dots,x_q)=f(x_1)f(x_2)\dots f(x_q)\ .
\end{equation*}
Now for $q\ge 1$ define the map $I_{q}$ as
\begin{equation}
I_{q}(f^{\otimes q}):=\,H_{q}(T(f))\ ,\qquad f\in H\ ,  \label{isometria}
\end{equation}%
which can be extended to a linear isometry between $H^{\odot q}$ equipped
with the modified norm $\sqrt{q!}\Vert \cdot \Vert _{H^{\odot q}}$ and the $%
q $-th Wiener chaos $C_{q}$. Moreover for $q=0$, set $I_{0}(c)=c\in \mathbb{R%
}$. Under the new notation the equality \eqref{chaos exp} becomes
\begin{equation}
F=\sum_{q=0}^{\infty }I_{q}(f_{q})\ ,  \label{chaos exp2}
\end{equation}%
where $f_{0}={\mathbb{\ E}}[F]$ and for $q\ge 1$, the kernels $f_{q}\in
H^{\odot q}$ are uniquely determined.

In our case, it is well known that for $h\in H^{\odot q}$, $I_q(h)$
coincides with the multiple Wiener-Ito integral of $h$ with respect to the
Gaussian measure $W$, i.e.
\begin{equation}  \label{int multiplo}
I_q(h)= \int_{X^q} h(x_1,x_2,\dots x_q)\,dW(x_1) dW(x_2)\dots dW(x_q)
\end{equation}
and, in words, $F$ in (\ref{chaos exp2}) can be seen as a series of
(multiple) stochastic integrals.

For every $p,q\ge 1$, $f\in H^{\otimes p}, g\in H^{\otimes q}$ and $%
r=1,2,\dots, p\wedge q$, the so-called \emph{contraction} of $f$ and $g$ of
order $r$ is the element $f\otimes _{r}g\in H^{\otimes p+q-2r}$ defined as
\begin{equation}\label{contrazione}
\begin{split}
(f\otimes _{r}g)&(x_{1},\dots,x_{p+q-2r})=\\
=\int_{X^{r}}f(x_{1},\dots,x_{p-r},y_{1},\dots,y_{r})
g(x_{p-r+1}&,\dots,x_{p+q-2r},y_{1},\dots,y_{r})\,d\mu(y_{1})\dots d\mu
(y_{r})\text{ .}
\end{split}
\end{equation}
For $p=q=r$, we have $f\otimes _{r}g = \langle f,g \rangle_{H^{\otimes_r}}$
and for $r=0$, $f\otimes _{0}g = f\otimes g$. Denote by $f\widetilde \otimes
_{r}g$ the canonical symmetrization of $f\otimes_r g$. The following
multiplication formula is well-known; for $p,q=1,2,\dots$, $f\in H^{\odot
p}, g\in H^{\odot q}$, we have
\begin{equation*}
I_p(f)I_q(g)=\sum_{r=0}^{p\wedge q} r! {\binom{p }{r}} {\binom{q }{r}}%
I_{p+q-2r}(f\widetilde \otimes _{r}g)\ .
\end{equation*}
We now briefly recall some basic Malliavin calculus formulas for this
setting. For $q,r\ge 1$, the $r$-th Malliavin derivative of a random
variable $F=I_{q}(f)\in C_{q}$ where $f\in H^{\odot q}$, can be identified as
the element $D^{r}F:\Omega \rightarrow H^{\odot r}$ given by
\begin{equation}
D^{r}F=\frac{q!}{(q-r)!}I_{q-r}(f)\text{ ,}  \label{marra2}
\end{equation}%
for $r\leq q$, and $D^{r}F=0$ for $r>q$. So that, the $r$-th Malliavin
derivative of the random variable $F$ in (\ref{chaos exp2}) could be written
as
\begin{equation*}
D^{r}F=\sum_{q=r}^{+\infty }\frac{q!}{(q-r)!}I_{q-r}(f_{q})\ .
\end{equation*}%
For simplicity of notation, we shall write $D$ instead of $D^{1}$. We say
that $F$ as in (\ref{chaos exp2}) belongs to $\mathbb{D}^{r,q}$ if
\begin{equation*}
\Vert F\Vert _{\mathbb{D}^{r,q}}:=\left( {\mathbb{\ E}}[|F|^{q}]+\dots {%
\mathbb{\ E}}[\Vert D^{r}F\Vert _{H^{\odot r}}^{q}]\right) ^{\frac{1}{q}%
}<+\infty \ ;
\end{equation*}%
it is easy to check that $F\in \mathbb{D}^{1,2}$ if and only if
\begin{equation*}
{\mathbb{\ E}}[\Vert DF\Vert _{H}^{2}]=\sum_{q=1}^{\infty }q\Vert
J_{q}(F)\Vert _{L^{2}(\Omega )}^{2}<+\infty \ .
\end{equation*}
We need to introduce also the generator of the Ornstein-Uhlenbeck semigroup,
defined as
\begin{equation*}
L=-\sum_{q=0}^{\infty }qJ_{q}\ ,
\end{equation*}%
where $J_{q}$ is the orthogonal projection operator on $C_{q}$, as in (\ref%
{chaos exp}). The domain of $L$ is $\mathbb{D}^{2,2}$, equivalently the
space of Gaussian subordinated random variables $F$ such that
\begin{equation*}
\sum_{q=1}^{+\infty }q^{2}\Vert J_{q}(F)\Vert _{L^{2}(\Omega )}^{2}<+\infty
\ .
\end{equation*}%
The pseudo-inverse operator of $L$ is defined as
\begin{equation*}
L^{-1}=-\sum_{q=1}^{\infty }\frac{1}{q}J_{q}
\end{equation*}%
and satisfies for each $F\in L^{2}(\Omega )$
\begin{equation*}
LL^{-1}F=F-{\mathbb{\ E}}[F]\text{ . }
\end{equation*}

\noindent The connection between stochastic calculus and probability metrics
is summarized in the following celebrated result (see e.g. \cite{noupebook},
Theorem 5.1.3), which will provide the basis for most of our results to
follow.

\begin{prop}
\label{BIGnourdinpeccati} 
Let $F\in
\mathbb{D}^{1,2}$ such that $\mathbb{E}[F]=0,$ $\mathbb{E}[F^{2}]=\sigma
^{2}<+\infty .$ Then we have%
\begin{equation*}
d_{W}(F,\mathcal{N}(0,1))\leq \sqrt{\frac{2}{\sigma ^{2}\,\pi }}\mathbb{E}%
[\left\vert \sigma ^{2}-\langle DF,-DL^{-1}F\rangle _{H}\right\vert ]\text{ .%
}
\end{equation*}%
Also, assuming in addition that $F$ has a density%
\begin{eqnarray*}
d_{TV}(F,\mathcal{N}(0,1)) &\leq &\frac{2}{\sigma ^{2}}\mathbb{E}[\left\vert
\sigma ^{2}-\langle DF,-DL^{-1}F\rangle _{H}\right\vert ]\text{ ,} \\
d_{K}(F,\mathcal{N}(0,1)) &\leq &\frac{1}{\sigma ^{2}}\mathbb{E}[\left\vert
\sigma ^{2}-\langle DF,-DL^{-1}F\rangle _{H}\right\vert ]\text{ .}
\end{eqnarray*}%
Moreover if $F\in \mathbb{D}^{1,4}$, we have also%
\begin{equation*}
\mathbb{E}[\left\vert \sigma ^{2}-\langle DF,-DL^{-1}F\rangle
_{H}\right\vert ]\leq \sqrt{\mathrm{Var}[\langle DF,-DL^{-1}F\rangle _{H}]}%
\text{ .}
\end{equation*}
\end{prop}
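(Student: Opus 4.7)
The plan is to combine the Stein method for normal approximation with the Malliavin integration-by-parts formula, following the Nourdin-Peccati approach. The argument splits into three ingredients: the Stein bounds on the solution of Stein's equation, the integration-by-parts identity linking $F$ to its Malliavin derivatives, and a straightforward Cauchy-Schwarz estimate.

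First, I would recall Stein's equation: for each test function $h$ in the class associated with one of the metrics $d_{\mathcal{D}}$ (Lipschitz with constant one for $d_W$, Borel indicator functions for $d_{TV}$, half-line indicators for $d_K$), one can solve explicitly the ODE
\begin{equation*}
\sigma^2 f'_h(x) - x f_h(x) = h(x) - \mathbb{E}[h(N)], \qquad N \sim \mathcal{N}(0,\sigma^2),
\end{equation*}
and establish the classical sup-norm bounds $\|f'_h\|_\infty \leq \sqrt{2/(\pi \sigma^2)}$, $2/\sigma^2$, and $1/\sigma^2$ respectively. For the $d_{TV}$ case one must additionally exploit the density assumption on $F$ in order to evaluate the discontinuous $f_h$ at $F$. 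Taking expectations with $x=F$ and supremizing over the relevant class of $h$ yields
\begin{equation*}
d_{\mathcal{D}}(F, N) \leq c_{\mathcal{D}} \sup_h \bigl|\mathbb{E}[\sigma^2 f'_h(F) - F f_h(F)]\bigr|.
\end{equation*}

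Second, I would derive the Malliavin integration-by-parts formula: for every $F \in \mathbb{D}^{1,2}$ with $\mathbb{E}[F]=0$ and every absolutely continuous $g$ with bounded derivative,
\begin{equation*}
\mathbb{E}[F\,g(F)] = \mathbb{E}\bigl[g'(F)\,\langle DF, -DL^{-1}F\rangle_H\bigr].
\end{equation*}
This follows from the duality identity $\mathbb{E}[FG] = \mathbb{E}[\langle DG, -DL^{-1}F\rangle_H]$ valid for every $G\in \mathbb{D}^{1,2}$, itself a consequence of writing $F - \mathbb{E}[F] = LL^{-1}F = -\delta DL^{-1}F$ and invoking the adjointness of the divergence $\delta$ with the Malliavin derivative $D$; one then specializes to $G = g(F)$ using the chain rule $Dg(F) = g'(F)\,DF$. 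Substituting $g = f_h$ gives
\begin{equation*}
\bigl|\mathbb{E}[\sigma^2 f'_h(F) - F f_h(F)]\bigr| = \bigl|\mathbb{E}[f'_h(F)(\sigma^2 - \langle DF, -DL^{-1}F\rangle_H)]\bigr| \leq \|f'_h\|_\infty \, \mathbb{E}\bigl[|\sigma^2 - \langle DF, -DL^{-1}F\rangle_H|\bigr],
\end{equation*}
and combining with the first step produces the three announced bounds on $d_W$, $d_{TV}$, $d_K$.

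For the final inequality, observe that taking $g(x) = x$ in the integration-by-parts formula yields $\mathbb{E}[\langle DF, -DL^{-1}F\rangle_H] = \mathbb{E}[F^2] = \sigma^2$; consequently the random variable $\sigma^2 - \langle DF, -DL^{-1}F\rangle_H$ is centered and Cauchy-Schwarz gives
\begin{equation*}
\mathbb{E}\bigl[|\sigma^2 - \langle DF, -DL^{-1}F\rangle_H|\bigr] \leq \sqrt{\mathrm{Var}\bigl[\langle DF, -DL^{-1}F\rangle_H\bigr]}.
\end{equation*}
The strengthening $F \in \mathbb{D}^{1,4}$ enters only to ensure, via hypercontractivity of the Ornstein-Uhlenbeck semigroup, that $\langle DF, -DL^{-1}F\rangle_H$ is in fact square integrable so that the above variance is finite. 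The main technical hurdle in the whole argument lies in the first step: obtaining the sharp sup-norm bounds on $\|f'_h\|_\infty$ for the discontinuous test functions relevant to $d_K$ and especially $d_{TV}$, which requires a careful analysis of the explicit integral representation of $f_h$ as a Gaussian convolution of $h$ together with the density assumption on $F$ to legitimize composition with a non-continuous Stein solution.
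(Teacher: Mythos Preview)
The paper does not prove this proposition at all: it is quoted as a known result from the Nourdin--Peccati monograph (Theorem~5.1.3 in \cite{noupebook}), and the paper simply cites it. Your sketch reproduces exactly the standard Stein--Malliavin argument from that reference, and it is correct. One small remark: the paper's statement compares $F$ with $\mathcal{N}(0,1)$ while you (correctly, following the cited theorem) set up Stein's equation for $\mathcal{N}(0,\sigma^2)$; this is almost certainly a typographical slip in the paper, since all applications there take $\sigma^2=1$ anyway.
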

Furthermore, in the special case where $F=I_{q}(f)$ for $f\in H^{\odot q}$,
then from \cite{noupebook}, Theorem 5.2.6
\begin{equation}
\mathbb{E}[\left\vert \sigma ^{2}-\langle DF,-DL^{-1}F\rangle
_{H}\right\vert ]\leq \sqrt{\frac{1}{q^{2}}\sum_{r=1}^{q-1}r^{2}r!^{2}{%
\binom{q}{r}}^{4}(2q-2r)!\Vert f\widetilde{\otimes }_{r}f\Vert _{H^{\otimes
2q-2r}}^{2}}\ .  \label{casoparticolare}
\end{equation}
Note that in (\ref{casoparticolare}) we can replace $\Vert f\widetilde{%
\otimes }_{r}f\Vert _{H^{\otimes 2q-2r}}^{2}$ with the norm of the unsymmetryzed
contraction $\Vert f\otimes _{r}f\Vert _{H^{\otimes 2q-2r}}^{2}$ for the upper
bound, because $\Vert f\widetilde{\otimes }_{r}f\Vert _{H^{\otimes
2q-2r}}^{2}\leq \Vert f\otimes _{r}f\Vert _{H^{\otimes 2q-2r}}^{2}$ by the
triangular inequality.

\subsection{Polynomial transforms in Wiener chaoses}

As mentioned earlier in \S \ref{main results}, we shall be concerned first
with random variables $h_{\ell ;q,d}$, $\ell \geq 1$, $q,d\geq 2$
\begin{equation*}
h_{\ell ;q,d}=\int_{{\mathbb{S}}^{d}}H_{q}(T_{\ell }(x))\,dx\ ,
\end{equation*}%
and their (finite) linear combinations
\begin{equation}
Z_{\ell }=\sum_{q=2}^{Q}\beta _{q}h_{\ell ;q,d}\text{ ,}\qquad \beta _{q}\in
\mathbb{R},Q\in \mathbb{N}\ .  \label{genovese}
\end{equation}
Our first objective is to represent \paref{genovese} as a (finite) sum of
(multiple) stochastic integrals as in (\ref{chaos exp2}), in order to apply
the results recalled in \S \ref{background}.1. More explicitly, we shall
first provide the isonormal representation (\ref{isonormal}) on $L^{2}({%
\mathbb{S}}^{d})
$
for the Gaussian random eigenfunctions $T_{\ell }$, $\ell\ge 1$ i.e.,
we shall show that the following identity in law holds:%
\begin{equation*}
T_{\ell }(x)\overset{\mathcal L}{=}\int_{{\mathbb{S}}^{d}}\sqrt{\frac{n_{\ell ;d}}{%
\mu _{d}}}G_{\ell ;d}(\cos d(x,y))\,dW(y)\text{ ,}\qquad x\in \cS^d\ ,
\end{equation*}%
where $W$ is a Gaussian white noise on ${\mathbb{S}}^{d}$. To compare with (%
\ref{isonormal}), $T_{\ell }(x)=T(f_{x})$, where $T$ is the isonormal Gaussian field
on $L^2(\cS^d)$ and $f_{x}(\cdot ):=\sqrt{\frac{%
n_{\ell ;d}}{\mu _{d}}}G_{\ell ;d}(\cos d(x,\cdot ))$. Moreover we have
immediately that
\begin{equation*}
{\mathbb{E}}\left[\int_{{\mathbb{S}}^{d}}\sqrt{\frac{n_{\ell ;d}}{\mu _{d}}}%
G_{\ell ;d}(\cos d(x,y))\,dW(y)\right]=0\ ,
\end{equation*}%
and by the reproducing formula for Gegenbauer polynomials (\cite{szego})
$$\displaylines{
{\mathbb{E}}\left[\int_{{\mathbb{S}}^{d}}\sqrt{\frac{n_{\ell ;d}}{\mu _{d}}}%
G_{\ell ;d}(\cos d(x_{1},y_{1}))\,dW(y_{1})\int_{{\mathbb{S}}^{d}}\sqrt{%
\frac{n_{\ell ;d}}{\mu _{d}}}G_{\ell ;d}(\cos d(x_{2},y_{2}))\,dW(y_{2})%
\right] = \cr
=\frac{n_{\ell ;d}}{\mu _{d}}\int_{{\mathbb{S}}^{d}}G_{\ell ;d}(\cos
d(x_{1},y))G_{\ell ;d}(\cos d(x_{2},y))dy=G_{\ell ;d}(\cos d(x_{1},x_{2}))%
\text{ .}
}$$
Note that by (\ref{isometria}), we also have
\begin{equation*}
\displaylines{ H_{q}(T_{\ell }(x))=I_{q}(f_{x}^{\otimes q})=\cr
=\int_{({\mathbb{S}}^{d})^{q}}\left( \frac{n_{\ell;d}}{\mu _{d}}\right)
^{q/2}G_{\ell ;d}(\cos d(x,y_{1}))\dots G_{\ell ;d}(\cos
d(x,y_{q}))\,dW(y_{1})...dW(y_{q})\text{ ,} }
\end{equation*}%
so that
\begin{equation*}
h_{\ell ;q,d}\overset{\mathcal L}{=}\int_{({\mathbb{S}}^{d})^{q}}g_{\ell
;q}(y_{1},...,y_{q})\,dW(y_{1})...dW(y_{q})\ ,
\end{equation*}%
where%
\begin{equation}
g_{\ell ;q}(y_{1},...,y_{q}):=\int_{{\mathbb{S}}^{d}}\left( \frac{n_{\ell ;d}%
}{\mu _{d}}\right) ^{q/2}G_{\ell ;d}(\cos d(x,y_{1}))\dots G_{\ell ;d}(\cos
d(x,y_{q}))\,dx\text{ .}  \label{moltobello}
\end{equation}%
Thus we just established that $h_{\ell ;q,d}\overset{\mathcal L}{=}I_{q}(g_{\ell ;q})$
and therefore
\begin{equation}
Z_{\ell }\overset{\mathcal L}{=}\sum_{q=2}^{Q}I_{q}(\beta _{q}\,g_{\ell ;q})\ ,
\end{equation}%
as required. It should be noted that for such random variables $Z_{\ell }$,
the conditions of the Proposition \ref{BIGnourdinpeccati} are trivially
satisfied.

\section{On the variance of $h_{\ell ;q,d}$}

In this section we study the variance  of $h_{\ell ;q,d}$ defined in \eqref{hq}. By (\ref%
{hermite orto}) and the definition of Gaussian random eigenfunctions
\eqref{defT}, it follows that \eqref{int var} hold at once:
\begin{equation*}
\displaylines{ \Var[h_{\ell;q,d}]= \E \left[ \left( \int_{\cS^d}
H_q(T_\ell(x))\,dx \right)^2 \right] = \int_{(\cS^d)^2} \E[ H_q(T_\ell(x_1))
H_q(T_\ell(x_2))]\,dx_1 dx_2 = \cr = q! \int_{(\cS^d)^2} \E[T_\ell(x_1)
T_\ell(x_2)]^q\,dx_1 dx_2 = q! \int_{(\cS^d)^2} G_{\ell;d}(\cos
d(x_1,x_2))^q\,dx_1 dx_2=\cr = q! \mu_d \mu_{d-1} \int_0^{\pi}
G_{\ell;d}(\cos \vartheta)^q (\sin \vartheta)^{d-1}\, d\vartheta. }
\end{equation*}%
Now we prove Proposition \ref{varianza}, inspired by the proof of \cite%
{MaWi2}, Lemma $5.2$.

\subsection{Proof Proposition \protect\ref{varianza}}

\begin{proof}

By the Hilb's asymptotic formula for Jacobi polynomials (see \cite{szego}, Theorem $8.21.12$),
we have uniformly for $\ell\ge 1$, $\vartheta\in [0, \tfrac{\pi}2]$
$$\displaylines{
(\sin \vartheta)^{\frac{d}{2} - 1}G_{\ell ;d} (\cos \vartheta) =
\frac{2^{\frac{d}{2} - 1}}{{\ell + \frac{d}{2}-1\choose \ell}}
\left( a_{\ell, d} \left(\frac{\vartheta}{\sin
\vartheta}\right)^{\tfrac12}J_{\frac{d}{2} - 1}(L\vartheta) +
\delta(\vartheta) \right)\ , }$$ where $L=\ell + \frac{d-1}{2}$,
\begin{equation}\label{al}
a_{\ell, d} = \frac{\Gamma(\ell + \frac{d}{2})}{(\ell + \frac{d-1}{2})^{\tfrac{d}{2}-1} \ell !}\
\sim\  1\quad \text{as}\ \ell \to \infty,
\end{equation}
and the remainder is
\begin{equation*}
\delta(\vartheta) \ll \begin{cases} \sqrt{\vartheta}\,
\ell^{-\tfrac32}\ & \qquad \ell^{-1}< \vartheta
< \tfrac{\pi}2\ ,             \\
\vartheta^{\left(\tfrac{d}2-1\right) + 2}\, \ell^{\tfrac{d}2-1}\ & \qquad 0 < \vartheta
< \ell^{-1}\ .
\end{cases}
\end{equation*}
Therefore, in light of \eqref{al} and $\vartheta \to \frac{\vartheta}{\sin \vartheta}$ being bounded,
\begin{equation}\label{eq:G moment main error}
\begin{split}
\int_{0}^{\frac{\pi}{2}} G_{\ell ;d} (\cos \vartheta)^q &(\sin
\vartheta)^{d-1}d\vartheta
=\left(\frac{2^{\frac{d}{2} - 1}}{{\ell + \frac{d}{2}-1\choose
\ell}}\right)^q a^q_{\ell,d} \int_0^{\frac{\pi}{2}} ( \sin
\vartheta)^{- q(\frac{d}{2} -1)}
 \Big( \frac{\vartheta}{\sin \vartheta} \Big)^{\frac{q}{2}}
J^q_{\frac{d}{2}-1}(L\vartheta) (\sin \vartheta)^{d-1} d\vartheta\ +\cr
&+O\left(\frac{1}{\ell^{q(\frac{d}{2}-1)}} \int_0^{\frac{\pi}{2}}
( \sin \vartheta)^{- q(\frac{d}{2} -1)}
|J_{\frac{d}{2}-1}(L\vartheta)|^{q-1}
\delta(\vartheta)(\sin \vartheta)^{d-1}d\vartheta\right),
\end{split}
\end{equation}
where we used $${\ell + \frac{d}{2}-1\choose \ell} \ll \frac{1}{\ell^{\frac{d}{2}-1}}$$
(note that we readily neglected the smaller terms, corresponding to higher powers of $\delta(\vartheta)$).
We rewrite \eqref{eq:G moment main error} as
\begin{equation}
\label{eq:G moment=M+E}
\begin{split}
\int_{0}^{\frac{\pi}{2}} G_{\ell ;d} (\cos \vartheta)^q (\sin
\vartheta)^{d-1} d\vartheta
=N+E\ ,
\end{split}
\end{equation}
where
\begin{equation}
\label{eq:Mdql def}
N=N(d,q;\ell) := \left(\frac{2^{\frac{d}{2} - 1}}{{\ell + \frac{d}{2}-1\choose
\ell}}\right)^q a^q_{\ell,d} \int_0^{\frac{\pi}{2}} ( \sin
\vartheta)^{- q(\frac{d}{2} -1)}
 \Big( \frac{\vartheta}{\sin \vartheta} \Big)^{\frac{q}{2}}
J_{\frac{d}{2}-1}(L\vartheta)^q (\sin \vartheta)^{d-1} d\vartheta\
\end{equation}
and
\begin{equation}
\label{eq:Edql def}
E=E(d,q;\ell) \ll \frac{1}{\ell^{q(\frac{d}{2}-1)}} \int_0^{\frac{\pi}{2}}
( \sin \vartheta)^{- q(\frac{d}{2} -1)}
|J_{\frac{d}{2}-1}(L\vartheta)|^{q-1}
\delta(\vartheta)(\sin \vartheta)^{d-1}d\vartheta\ .
\end{equation}
To bound the error term $E$ we split the range of the integration in \eqref{eq:Edql def}
and write
\begin{equation}
\label{eq:int error split}
\begin{split}
E \ll & \frac{1}{\ell^{q(\frac{d}{2}-1)}}
\int\limits_{0}^{\frac{1}{\ell}}( \sin \vartheta)^{- q(\frac{d}{2} -1)}
|J_{\frac{d}{2}-1}(L\vartheta)|^{q-1}
\vartheta^{\left(\tfrac{d}2-1\right) + 2}\, \ell^{\tfrac{d}2-1}(\sin \vartheta)^{d-1}\,d\vartheta +
\\&+\frac{1}{\ell^{q(\frac{d}{2}-1)}}
\int_{\frac{1}{\ell}}^{\frac{\pi}{2}}
( \sin \vartheta)^{- q(\frac{d}{2} -1)}
|J_{\frac{d}{2}-1}(L\vartheta)|^{q-1}
\sqrt{\vartheta}\,
\ell^{-\tfrac32}(\sin \vartheta)^{d-1}\,d\vartheta\ .
\end{split}
\end{equation}
For the first integral in \eqref{eq:int error split}
recall that $J_{\frac{d}{2}-1}(z) \sim z^{\frac{d}{2}-1}$ as
$z\to 0$, so that as $\ell\to \infty$,
$$\displaylines{
\frac{1 }{\ell^{(q-1)(\frac{d}{2}-1)}} \int_0^{\frac{1}{\ell}}
\left(\frac{\vartheta}{ \sin \vartheta}\right)^{ q(\frac{d}{2} -1)-d+1}
|J_{\frac{d}{2}-1}(L\vartheta)|^{q-1}
\vartheta^{-(q-1)\left(\tfrac{d}2-1\right) + d+1}\, \,d\vartheta
\ll\cr
}$$
\begin{equation}\label{err1}
\ll \int_0^{\frac{1}{\ell}}\vartheta^{d+1}\,d\vartheta = \frac{1}{\ell^{d+2}}\ ,
\end{equation}
which is enough for our purposes. Furthermore,
since for $z$ big $|J_{\frac{d}{2}-1}(z)|=O(z^{-\tfrac12})$ (and keeping in mind
that $L$ is of the same order of magnitude as $\ell$), we may bound the second integral
in \eqref{eq:int error split} as
$$\displaylines{\ll
\frac{1}{\ell^{q(\frac{d}{2}-1)+\frac32}}\int_{\frac{1}{\ell}}^{\frac{\pi}{2}}
\left( \frac{\vartheta}{\sin \vartheta}\right)^{ q(\frac{d}{2} -1)-d+1}
|J_{\frac{d}{2}-1}(L\vartheta)|^{q-1}
\vartheta^{-q(\frac{d}{2}-1)+d-\frac12}\,d\vartheta \ll \cr
\ll
\frac{1}{\ell^{q(\frac{d}{2}-1)+\frac32}}
\int_{\frac{1}{\ell}}^{\frac{\pi}{2}}
(\ell\vartheta)^{-\frac{q-1}{2}}
\vartheta^{-q(\frac{d}{2}-1)+d-\frac12}\,d\vartheta
=\frac{1}{\ell^{q(\frac{d}{2}-\frac12)+2}}
\int_{\frac{1}{\ell}}^{\frac{\pi}{2}}
\vartheta^{-q(\frac{d}{2}-\frac12)+d}\,d\vartheta \ll \cr
}$$
\begin{equation}\label{err2}
\ll \frac{1}{\ell^{(d+2)\wedge \left(q\left(\tfrac{d}2 -\tfrac{1}2\right) +1\right)}} = o(\ell^{-d})\ ,
\end{equation}
where the last equality in \paref{err2} holds for $q\ge 3$.
From \paref{err1} (bounding the first integral in \eqref{eq:int error split}) and
\paref{err2} (bounding the second integral in \eqref{eq:int error split}) we finally find that the error term in
\eqref{eq:G moment=M+E} is
\begin{equation}
\label{resto}
E =o(\ell^{-d})
\end{equation}
for $q\ge 3$, admissible for our purposes.

Therefore, substituting \paref{resto} into \eqref{eq:G moment=M+E} we have
$$\displaylines{
\int_{0}^{\frac{\pi}{2}} G_{\ell ;d} (\cos \vartheta)^q (\sin
\vartheta)^{d-1}\, d\vartheta =\cr
=\left(\frac{2^{\frac{d}{2} -
1}}{{\ell + \frac{d}{2}-1\choose \ell}}\right)^q a^q_{\ell,d}
\int_0^{\frac{\pi}{2}} ( \sin \vartheta)^{- q(\frac{d}{2} -1)}
 \Big( \frac{\vartheta}{\sin \vartheta} \Big)^{\frac{q}{2}}
J_{\frac{d}{2}-1}(L\vartheta)^q (\sin \vartheta)^{d-1}d\vartheta + o(\ell^{-d}) =
}$$
\begin{equation}
\label{bene}
=\left(\frac{2^{\frac{d}{2} -
1}}{{\ell + \frac{d}{2}-1\choose \ell}}\right)^q a^q_{\ell,d} \frac{1}{L}
\int_0^{L\frac{\pi}{2}} ( \sin \psi/L)^{- q(\frac{d}{2} -1)}
 \Big( \frac{\psi/L}{\sin \psi/L} \Big)^{\frac{q}{2}}
J_{\frac{d}{2}-1}(\psi)^q (\sin \psi/L)^{d-1}\, d\psi + o(\ell^{-d})\ ,
\end{equation}
where in the last equality we transformed $\psi/L=\vartheta$; it then remains to evaluate
the first term in \paref{bene}, which we denote by
\begin{equation*}
N_L := \left(\frac{2^{\frac{d}{2} -
1}}{{\ell + \frac{d}{2}-1\choose \ell}}\right)^q a^q_{\ell,d} \frac{1}{L}
\int_0^{L\frac{\pi}{2}} ( \sin \psi/L)^{- q(\frac{d}{2} -1)}
 \Big( \frac{\psi/L}{\sin \psi/L} \Big)^{\frac{q}{2}}
J_{\frac{d}{2}-1}(\psi)^q (\sin \psi/L)^{d-1}\, d\psi\ .
\end{equation*}
Now recall that as $\ell\to \infty$
\begin{equation*}
{\ell + \frac{d}{2}-1\choose \ell}\ \sim \ \frac{\ell^{\frac{d}{2}-1}}{(\frac{d}2-1)!}\ ;
\end{equation*}
moreover \paref{al} holds,   therefore we find of course that as $L\to \infty$
\begin{equation}\label{mado}
N_L\  \sim\  \frac{(2^{\frac{d}{2} -
1}(\frac{d}2-1)!)^q}{L^{q(\frac{d}{2}-1)+1}}
\int_0^{L\frac{\pi}{2}} ( \sin \psi/L)^{- q(\frac{d}{2} -1)}
 \Big( \frac{\psi/L}{\sin \psi/L} \Big)^{\frac{q}{2}}
J_{\frac{d}{2}-1}(\psi)^q (\sin \psi/L)^{d-1}\, d\psi\ .
\end{equation}
In order to finish the proof of Proposition \ref{varianza},
it is enough to check that, as $L\to \infty$
$$\displaylines{
L^d \, \frac{(2^{\frac{d}{2} -
1}(\frac{d}2-1)!)^q}{L^{q(\frac{d}{2}-1)+1}}
\int_0^{L\frac{\pi}{2}} ( \sin \psi/L)^{- q(\frac{d}{2} -1)}
 \Big( \frac{\psi/L}{\sin \psi/L} \Big)^{\frac{q}{2}}
J_{\frac{d}{2}-1}(\psi)^q (\sin \psi/L)^{d-1}\, d\psi\, \goto\, c_{q;d}\ ,
}$$
actually from \paref{bene} and \paref{mado}, we have
$$\displaylines{
\lim_{\ell\to +\infty} \ell^d \int_{0}^{\frac{\pi}{2}} G_{\ell ;d} (\cos \vartheta)^q (\sin
\vartheta)^{d-1}\, d\vartheta = \cr
= \lim_{L\to +\infty} L^d \, \frac{(2^{\frac{d}{2} -
1}(\frac{d}2-1)!)^q}{L^{q(\frac{d}{2}-1)+1}}
\int_0^{L\frac{\pi}{2}} ( \sin \psi/L)^{- q(\frac{d}{2} -1)}
 \Big( \frac{\psi/L}{\sin \psi/L} \Big)^{\frac{q}{2}}
J_{\frac{d}{2}-1}(\psi)^q (\sin \psi/L)^{d-1}\, d\psi \ .
}$$
Now we write
$$\frac{\psi/L}{\sin \psi/L} = 1 + O\left( \psi^2/L^2  \right),$$
so that
$$\displaylines{
L^d\, \frac{\left(2^{\frac{d}{2} - 1}(\frac{d}2-1)!\right)^q}{L^{q(\frac{d}{2}-1)+1}}
\int_0^{L\frac{\pi}{2}} ( \sin \psi/L)^{- q(\frac{d}{2} -1)}
 \Big( \frac{\psi/L}{\sin \psi/L} \Big)^{\frac{q}{2}}
J_{\frac{d}{2}-1}(\psi)^q (\sin \psi/L)^{d-1}\, d\psi=\cr
=\left(2^{\frac{d}{2} - 1}(\frac{d}2-1)!\right)^q
\int_0^{L\frac{\pi}{2}}
 \Big( \frac{\psi/L}{\sin \psi/L} \Big)^{q(\tfrac{d}{2} -\tfrac12)-d+1}
J_{\frac{d}{2}-1}(\psi)^q \psi^{- q(\frac{d}{2} -1)+d-1}\, d\psi=\cr
=\left(2^{\frac{d}{2} - 1}(\frac{d}2-1)!\right)^q
\int_0^{L\frac{\pi}{2}}
 \Big( 1 + O\left( \psi^2/L^2  \right) \Big)^{q(\tfrac{d}{2} -\tfrac12)-d+1}
J_{\frac{d}{2}-1}(\psi)^q \psi^{- q(\frac{d}{2} -1)+d-1}\, d\psi=\cr
=\left(2^{\frac{d}{2} - 1}(\frac{d}2-1)!\right)^q
\int_0^{L\frac{\pi}{2}}
J_{\frac{d}{2}-1}(\psi)^q \psi^{- q(\frac{d}{2} -1)+d-1}\, d\psi +\cr
+
O\left( \frac{1}{L^2}\int_0^{L\frac{\pi}{2}}
J_{\frac{d}{2}-1}(\psi)^q \psi^{- q(\frac{d}{2} -1)+d+1}\,d\psi  \right).
}$$
Note that as  $L\to +\infty$, the first term of the previous summation
 converges to $c_{q;d}$ defined in \paref{cq}, i.e.
\begin{equation}\label{to cq}
\left(2^{\frac{d}{2} - 1}(\frac{d}2-1)!\right)^q
\int_0^{L\frac{\pi}{2}}
J_{\frac{d}{2}-1}(\psi)^q \psi^{- q(\frac{d}{2} -1)+d-1}\, d\psi \to c_{q;d}\ .
\end{equation}
It remains to bound the remainder
$$
 \frac{1}{L^2}\int_0^{L\frac{\pi}{2}}
|J_{\frac{d}{2}-1}(\psi)|^q \psi^{- q(\frac{d}{2} -1)+d+1}\,d\psi =
O(1) +  \frac{1}{L^2}\int_1^{L\frac{\pi}{2}}
|J_{\frac{d}{2}-1}(\psi)|^q \psi^{- q(\frac{d}{2} -1)+d+1}\,d\psi\ .
$$
Now for the second term on the r.h.s.
$$\displaylines{
\int_1^{L\frac{\pi}{2}}
|J^q_{\frac{d}{2}-1}(\psi)| \psi^{- q(\frac{d}{2} -1)+d+1}\,d\psi \ll
\int_1^{L\frac{\pi}{2}} \psi^{- q(\frac{d}{2} -\frac{1}{2})+d+1}\,d\psi=\cr
= O(1 + L^{- q(\frac{d}{2} -\frac{1}{2})+d+2})\ .
}$$
Therefore we obtain
$$\displaylines{
\left(2^{\frac{d}{2} - 1}(\frac{d}2-1)!\right)^q \int_0^{L\frac{\pi}{2}}
J_{\frac{d}{2}-1}(\psi)^q \psi^{- q(\frac{d}{2} -1)+d-1}\, d\psi +
O\left( \frac{1}{L^2}\int_0^{L\frac{\pi}{2}}
J_{\frac{d}{2}-1}(\psi)^q \psi^{- q(\frac{d}{2} -1)+d+1}\,d\psi
\right)=\cr
= \left(2^{\frac{d}{2} - 1}(\frac{d}2-1)!\right)^q
\int_0^{L\frac{\pi}{2}} J_{\frac{d}{2}-1}(\psi)^q \psi^{-
q(\frac{d}{2} -1)+d-1}\, d\psi + O(L^{-2} + L^{- q(\frac{d}{2}
-\frac{1}{2})+d})\ , }$$ so that we have just checked the
statement of the present proposition for $q > \frac{2d}{d-1}$. This is indeed enough for each $q\ge
3$  when
$d\ge 4$ .

It remains to investigate separately just the case $d=q=3$.
Recall that for $d=3$ we have an explicit formula for the Bessel
function of order $\frac{d}{2}-1$ (\cite{szego}), that is
\begin{equation*}
J_{\frac{1}{2}}(z) = \sqrt{\frac{2}{\pi z}} \sin (z)\ ,
\end{equation*}
and hence the integral in \paref{cq} is indeed convergent for $q=d=3$ by integrations by parts.

We have hence to study the convergence of the following integral
\begin{equation*}
\frac{8}{\pi^\frac{3}{2} }
\int_0^{L\frac{\pi}{2}}
\left( \frac{\psi/L}{\sin \psi/L} \right)
\frac{\sin^3 \psi}{\psi} \, d\psi\ .
\end{equation*}
To this aim, let us consider a large parameter $K\gg 1$ and divide the integration range
into $[0,K]$ and $[K, \frac{\pi}{2}]$;
the main contribution comes from the first term, whence we have to prove that the latter vanishes.
Note that
\begin{equation}
\label{eq:int K-> bound}
\int_K^{L\frac{\pi}{2}} \left( \frac{\psi/L}{\sin \psi/L} \right)
\frac{\sin^3 \psi}{\psi} \, d\psi \ll \frac{1}{K}\ ,
\end{equation}
where we use integration by part with the bounded function $I(T) = \int_0^T \sin^3 z\, dz$.
On $[0,K]$, we write
$$\displaylines{
\frac{8}{\pi^\frac{3}{2} }\int_0^{K} \left( \frac{\psi/L}{\sin \psi/L} \right)
\frac{\sin^3 \psi}{\psi} \, d\psi
= \frac{8}{\pi^\frac{3}{2} }\int_0^{K}
\frac{\sin^3 \psi}{\psi} \, d\psi + O\left( \frac{1}{L^2}\int_0^{K}
\psi \sin^3 \psi\, d\psi   \right) = \cr
=\frac{8}{\pi^\frac{3}{2} }\int_0^{K}
\frac{\sin^3 \psi}{\psi} \, d\psi +O\left( \frac{K^2}{L^2} \right).
}$$
Consolidating the latter with \eqref{eq:int K-> bound} we find that
$$\displaylines{
\frac{8}{\pi^\frac{3}{2} }
\int_0^{L\frac{\pi}{2}}
\left( \frac{\psi/L}{\sin \psi/L} \right)
\frac{\sin^3 \psi}{\psi} \, d\psi = \frac{8}{\pi^\frac{3}{2} }\int_0^{K}
\frac{\sin^3 \psi}{\psi} \, d\psi + O\left(  \frac{1}{K} +  \frac{K^2}{L^2} \right).
}$$
Now as $K\to +\infty$,
$$
\frac{8}{\pi^\frac{3}{2} }\int_0^{K}
\frac{\sin^3 \psi}{\psi} \, d\psi \to c_{3;3}\ ;
$$
to conclude the proof, it is then enough to choose $K=K(L)\rightarrow\infty$
sufficiently slowly, i.e. $K=\sqrt{L}$.
\end{proof}

\section{The quantitative Central Limit Theorem for $h_{\ell;q,d}$}

In this section we prove Theorem \ref{teo1} with the help of Proposition \ref%
{BIGnourdinpeccati} and \eqref{casoparticolare} in particular. The
identifications of \S 2.2 lead to some very explicit expressions for the
contractions \eqref{contrazione}, as in the following result.

For $\ell\ge 1, q\ge 2$, let $g_{\ell ;q}$ be defined as in \eqref{moltobello}.

\begin{lemma}
\label{contractions}
For all $q_{1},q_{2}\ge 2$, $r=1,...,q_{1}\wedge q_{2}-1$,
we have the identities
$$\displaylines{ \left\Vert g_{\ell ;q_{1}}\otimes _{r}g_{\ell
;q_{2}}\right\Vert _{H^{\otimes
n}}^{2}=\cr
= \int_{(\cS^{d})^4}G_{\ell
;d}^{r}(\cos d( x_{1},x_{2}) ) G_{\ell
;d}^{q_{1}\wedge q_{2}-r}(\cos d( x_{2},x_{3}) )   G_{\ell
;d}^{r}(\cos d(x_{3},x_{4}) )      G_{\ell
;d}^{q_{1}\wedge q_{2}-r}(\cos d( x_{1},x_{4}) )\,d\underline{x}\ , }
$$
where we set $d\underline{x} := dx_{1}dx_{2}dx_{3}dx_{4}$ and
$n:= q_{1}+q_{2}-2r$.
\end{lemma}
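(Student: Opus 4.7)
The plan is to unfold the definitions and then apply, twice, the reproducing kernel property of Gegenbauer polynomials, which on the sphere takes the form
\begin{equation*}
\int_{\mathbb{S}^{d}} G_{\ell;d}(\cos d(x,y))\,G_{\ell;d}(\cos d(x',y))\,dy = \frac{\mu_{d}}{n_{\ell;d}}\,G_{\ell;d}(\cos d(x,x'))\ ,
\end{equation*}
the same identity that was used in \S2.2 to compute the covariance of the isonormal representation of $T_{\ell}$. This is the only analytic ingredient needed; everything else is bookkeeping.

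First I would substitute the explicit kernel \paref{moltobello} into the definition of the contraction \eqref{contrazione}. Writing the variables of $g_{\ell;q_{1}}$ as $(u_{1},\dots,u_{q_{1}-r},y_{1},\dots,y_{r})$ and those of $g_{\ell;q_{2}}$ as $(v_{1},\dots,v_{q_{2}-r},y_{1},\dots,y_{r})$, the two kernels are integrals over auxiliary points $x_{1},x_{2}\in\mathbb{S}^{d}$. Swapping the order of integration so that the $r$ shared variables $y_{1},\dots,y_{r}$ are dealt with first, each $y_{j}$-integral produces exactly the reproducing formula above, yielding a factor $(\mu_{d}/n_{\ell;d})^{r}\,G_{\ell;d}^{r}(\cos d(x_{1},x_{2}))$. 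Thus one obtains the compact representation
\begin{equation*}
(g_{\ell;q_{1}}\otimes_{r}g_{\ell;q_{2}})(u,v) = \Big(\tfrac{n_{\ell;d}}{\mu_{d}}\Big)^{\tfrac{q_{1}+q_{2}}{2}-r}\!\int_{(\mathbb{S}^{d})^{2}}\!G_{\ell;d}^{r}(\cos d(x_{1},x_{2}))\prod_{i}G_{\ell;d}(\cos d(x_{1},u_{i}))\prod_{j}G_{\ell;d}(\cos d(x_{2},v_{j}))\,dx_{1}dx_{2}.
\end{equation*}

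Next I would compute the squared $H^{\otimes n}$-norm. Squaring doubles the pair $(x_{1},x_{2})$ into two independent pairs which we relabel $(x_{1},x_{2})$ and $(x_{3},x_{4})$, contributing the two factors $G_{\ell;d}^{r}(\cos d(x_{1},x_{2}))\,G_{\ell;d}^{r}(\cos d(x_{3},x_{4}))$. The $u_{i}$-integrals then decouple into $q_{1}\wedge q_{2}-r$ independent factors, each of which is again reducible by the reproducing formula to $(\mu_{d}/n_{\ell;d})\,G_{\ell;d}(\cos d(x_{1},x_{4}))$ (after an appropriate swap $x_{3}\leftrightarrow x_{4}$ so that the four points sit on a four-cycle $x_{1}\!-\!x_{2}\!-\!x_{3}\!-\!x_{4}\!-\!x_{1}$); similarly the $v_{j}$-integrals collapse to $q_{1}\wedge q_{2}-r$ copies of $(\mu_{d}/n_{\ell;d})\,G_{\ell;d}(\cos d(x_{2},x_{3}))$. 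The accumulated powers of $n_{\ell;d}/\mu_{d}$ from the two steps cancel exactly against the normalizing factor in the definition of $g_{\ell;q}$, leaving the clean four-point integral in the statement.

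I do not anticipate any serious obstacle: the only point requiring care is keeping track of the combinatorial labelling of the four auxiliary sphere variables so that the exponents $r$ and $q_{1}\wedge q_{2}-r$ distribute as in the claimed four-cycle pattern, and verifying that the powers of $n_{\ell;d}/\mu_{d}$ coming from the two applications of the reproducing formula telescope to $1$. Both are automatic once the integrations are performed in the order sketched above.
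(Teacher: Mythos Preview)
Your proposal is correct and follows essentially the same route as the paper: first collapse the $r$ contracted variables via the Gegenbauer reproducing formula to obtain the two–point representation of $g_{\ell;q_{1}}\otimes_{r}g_{\ell;q_{2}}$, then square, introduce a second pair of auxiliary sphere points, and apply the reproducing formula again to the remaining $n$ outer variables so that the normalizing powers of $n_{\ell;d}/\mu_{d}$ cancel. The paper's proof (which assumes w.l.o.g.\ $q_{1}\le q_{2}$) carries out exactly these two reductions in the same order, so there is no substantive difference between the arguments.
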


\begin{proof}
Assume w.l.o.g. $q_1\le q_2$ and set for simplicity of notation $d\underline{t}:=dt_{1}\dots dt_{r}$.
The contraction \paref{contrazione} here takes the form%
$$\displaylines{
(g_{\ell;q_{1} }\otimes _{r}g_{\ell;q_{2}})(y_{1},...,y_{n})=\cr
=\int_{(\cS^d)^r} g_{\ell;q_{1} }(y_{1},\dots,y_{q_{1}-r},t_{1},\dots,t_{r})g_{\ell;q_{2}
}(y_{q_{1}-r+1},\dots,y_{n},t_{1},\dots,t_{r})\,d\underline{t}=\cr
=\int_{(\cS^d)^r}  \int_{\cS^{d}}\left(\frac{n_{\ell; d}}{\mu_d }
\right) ^{q_1/2} G_{\ell ;d}(\cos d(
x_{1},y_{1}) )\dots 
G_{\ell
;d}(\cos d( x_{1},t_{r}) )\,dx_{1}\times \cr
\times \int_{\cS^{d}}\left(\frac{n_{\ell; d}}{\mu_d }
\right) ^{q_2/2}G_{\ell ;d}(\cos d(
x_{2},y_{q_{1}-r+1}) )\dots 
G_{\ell ;d}(\cos d(
x_{2},t_{r}) )\,dx_{2}\, d\underline{t}=\cr
=\int_{(\cS^{d})^2}\left(\frac{n_{\ell; d}}{\mu_d }
\right) ^{n/2}G_{\ell ;d}(\cos d(
x_{1},y_{1}) )\dots  G_{\ell ;d}(\cos d(
x_{1},y_{q_1-r}) )\times \cr
\times G_{\ell ;d}(\cos d(
x_{2},y_{q_{1}-r+1}) )\dots G_{\ell ;d}(\cos d(
x_{2},y_{n}) )G_{\ell
;d}^{r}(\cos d( x_{1},x_{2}) )\,dx_{1}dx_{2}\ ,
}$$
where in the last equality
we have repeatedly used the reproducing property of Gegenbauer polynomials (\cite{szego}).
Now set $d\underline{y}:=dy_{1}\dots dy_{n}$. It follows at once that
$$\displaylines{
\left\Vert g_{\ell; q_{1} }\otimes _{r}g_{\ell; q_{2} }\right\Vert _{%
{H}^{\otimes n}}^{2}=\cr
=\int_{(\cS^d)^{n}} (g_{\ell; q_{1} }\otimes _{r}g_{\ell; q_{2} })^{2}
(y_{1},\dots,y_{n})\,d\underline{y}=\cr
=\int_{(\cS^d)^{n}} \int_{(\cS^{d})^2}\left(\frac{n_{\ell; d}}{\mu_d
} \right) ^{n}G_{\ell ;d}(\cos d(
x_{1},y_{1}) )\dots G_{\ell ;d}(\cos d(
x_{2},y_{n}) )G_{\ell
;d}^{r}(\cos d(x_{1},x_{2}) )dx_{1}dx_{2}\times \cr
\times
 \int_{(\cS^{d})^2}G_{\ell ;d}(\cos d(
x_{4},y_{1}) )\dots G_{\ell ;d}(\cos d(
x_{3},y_{n}) )G_{\ell
;d}^{r}(\cos d( x_{3},x_{4})
)\,dx_{3}dx_{4}\,d\underline{y}=\cr
=\int_{(\cS^{d})^4}G_{\ell
;d}^{r}(\cos d( x_{1},x_{2}) ) G_{\ell
;d}^{q_{1}-r}(\cos d( x_{2},x_{3}) )   G_{\ell
;d}^{r}(\cos d(x_{3},x_{4}) )      G_{\ell
;d}^{q_{1}-r}(\cos d( x_{1},x_{4}) )
\,d\underline{x}\ ,
}$$
as claimed.
\end{proof}
We need now to introduce some further notation, i.e. for $q\ge 2$ and $r= 1,\dots, q-1$
\begin{equation*}
\displaylines{ \mathcal{K}_{\ell }(q;r):=\int_{(\cS^{d})^4}G_{\ell
;d}^{r}(\cos d( x_{1},x_{2}) ) G_{\ell ;d}^{q-r}(\cos d( x_{2},x_{3})
)\times \cr \times G_{\ell ;d}^{r}(\cos d(x_{3},x_{4}) ) G_{\ell
;d}^{q-r}(\cos d( x_{1},x_{4}) ) \,dx_{1}dx_{2}dx_{3}dx_{4}, }
\end{equation*}%
Lemma \ref{contractions} asserts that
\begin{equation}\label{K}
\mathcal{K}_{\ell }(q;r)=\left\Vert g_{\ell ;q}\otimes _{r}g_{\ell
;q}\right\Vert _{H^{\otimes 2q-2r}}^{2}\text{ ;}
\end{equation}%
it is immediate to check that
\begin{equation}\label{simm}
\mathcal{K}_{\ell }(q;r)= \mathcal{K}_{\ell }(q;q-r)\ .
\end{equation}
In the following two propositions we bound each term of the form $\mathcal{K}(q;r)$  (from (\ref{simm})
it is enough to consider $r=1,\dots, \left[\frac{q}2\right]$).
As noted in
\S $1.1$, these bounds improve the existing literature even for the case $d=2,$
from which we start our analysis.

For $d=2,$ as previously recalled, Gegenbauer polynomials become standard
Legendre polynomials $P_{\ell },$ for which it is well-known that (see (\ref%
{momento 2}))
\begin{equation}
\int_{{\mathbb{S}}^{2}}P_{\ell }(\cos d( x_{1},x_{2}) )^{2}\,dx_{1}=O\left(
\frac{1}{\ell }\right) \text{ ;}  \label{tri1}
\end{equation}%
also, from \cite{MaWi12}, Lemma 3.2 we have that%
\begin{equation}
\int_{{\mathbb{S}}^{2}}P_{\ell }(\cos d( x_{1},x_{2}) )^{4}\,dx_{1}=O\left(
\frac{\log \ell }{\ell ^{2}}\right) \text{ .}  \label{tri2}
\end{equation}%
Finally, it is trivial to show that%
\begin{equation}
\int_{{\mathbb{S}}^{2}}\left\vert P_{\ell }(\cos d( x_{1},x_{2})
)\right\vert\, dx_{1}\leq \sqrt{\int_{{\mathbb{S}}^{2}}P_{\ell
}(\cos d( x_{1},x_{2}) )^{2}\,dx_{1}}=O\left( \frac{1}{\sqrt{%
\ell }}\right)   \label{tri3}
\end{equation}%
and%
\begin{equation}
\int_{{\mathbb{S}}^{2}}\left\vert P_{\ell }(\cos d( x_{2},x_{3})
)\right\vert ^{3}\,dx_{2}\leq \sqrt{\int_{{\mathbb{S}}^{2}}P_{\ell
}(\cos d( x_{2},x_{3}) )^{2}\,dx_{2}}\sqrt{\int_{{\mathbb{S}}^{2}}P_{\ell
}(\cos d( x_{1},x_{2}) )^{4}\,dx_{1}}=O\left( \sqrt{\frac{\log \ell }{\ell
^{3}}}\right) \text{ .}  \label{tri4}
\end{equation}%

\begin{prop}
\label{cum2} For all $r=1,2,\dots ,q-1,$ we have%
\begin{eqnarray}
 \mathcal{K}_{\ell }(q;r) &=& O\left(\frac{1}{\ell ^{5}}%
\right)\text{ for }q=3\text{ ,}  \label{hotel1} \\
\mathcal{K}_{\ell }(q;r) &=& O\left(\frac{1}{\ell ^{4}}%
\right)\text{ for }q=4\text{ ,}  \label{hotel2}\\
\mathcal{K}_{\ell }(q;r) &=& O\left(\frac{\log \ell }{%
\ell ^{9/2}}\right)\text{  for }q=5,6\text{ }  \label{hotel3}
\end{eqnarray}%
and%
\begin{equation}
\mathcal{K}_{\ell }(q;1) =\mathcal{K}_{\ell
}(q;q-1) =O\left(\frac{1}{\ell ^{9/2}}\right)\text{ , }
\mathcal{K}_{\ell }(q;r) =O\left(\frac{1}{\ell ^{5}}\right)\text{
, }r=2,...,q-2,\text{ for }q\geq 7\text{ .}  \label{hotel4}
\end{equation}
\end{prop}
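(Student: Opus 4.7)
I would view $\mathcal{K}_\ell(q;r)$ as an integral over the $4$-cycle $x_1{-}x_2{-}x_3{-}x_4{-}x_1$ with alternating edge weights $P_\ell^{r}$ and $P_\ell^{q-r}$, and bound it case-by-case using two tools: Cauchy--Schwarz combined with the moment bounds \eqref{tri1}--\eqref{tri4}, and the reproducing-kernel identity
\[
\int_{\mathbb{S}^2} P_\ell(\cos d(x,y))\,P_\ell(\cos d(y,z))\,dy \;=\; \tfrac{4\pi}{2\ell+1}\,P_\ell(\cos d(x,z))
\]
applied after expanding $P_\ell^{s}$ in Legendre polynomials.

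\textbf{Cases reachable by Cauchy--Schwarz.} For $q\ge 7$ with $r=1$ (or $r=q-1$), $q\in\{5,6\}$, and $q=4$, the rates $O(\ell^{-9/2})$, $O((\log\ell)\,\ell^{-9/2})$ and $O(\ell^{-4})$ respectively should be reachable by two Cauchy--Schwarz splits of the $4$-cycle into matching halves such as $[P_\ell^{r}(\cdot,\cdot)\,P_\ell^{q-r}(\cdot,\cdot)]\cdot[P_\ell^{r}(\cdot,\cdot)\,P_\ell^{q-r}(\cdot,\cdot)]$, each estimated by $\int_{\mathbb{S}^2} P_\ell^{2k}$: $k=1$ invokes \eqref{tri1}, $k=2$ invokes \eqref{tri2} (the source of the $\log\ell$), while $2k\ge 5$ invokes the asymptotic \eqref{int2} giving $O(\ell^{-2})$. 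When a pure $P_\ell$-edge appears (i.e.\ $r=1$ or $q-r=1$), integrating the adjacent vertex via the reproducing kernel contributes an extra factor $O(\ell^{-1})$ that sharpens the final bound.

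\textbf{The hard cases ($q=3$ and interior $r$ for $q\ge 7$) and the main obstacle.} For $q=3$ and for the interior cases $r\in\{2,\dots,q-2\}$ with $q\ge 7$, the target $O(\ell^{-5})$ is strictly sharper than what Cauchy--Schwarz alone can reach (in the $q=3$ case the best one can extract from \eqref{tri1}--\eqref{tri4} is $O((\log\ell)\,\ell^{-3})$, short by a factor $\ell^{2}$). To close this gap I would integrate out two opposite vertices of the cycle by expanding $P_\ell^{s}=\sum_k c_k^{(s)}P_k$ and using the reproducing-kernel orthogonality to collapse each vertex integral to a single surviving mode; in the crucial $q=3$ case this gives
\[
\int_{\mathbb{S}^2} P_\ell(\cos d(x_2,x_3))\,P_\ell(\cos d(x_3,x_4))^{2}\,dx_3 \;=\; c_\ell^{(2)}\,\tfrac{4\pi}{2\ell+1}\,P_\ell(\cos d(x_2,x_4)),
\]
where $c_\ell^{(2)} = (2\ell+1)\binom{\ell\ \ell\ \ell}{0\ 0\ 0}^{2} = O(\ell^{-1})$ by the semiclassical (Ponzano--Regge) asymptotic for Wigner $3j$ symbols. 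A parallel integration in $x_1$ yields another factor $O(\ell^{-2})$, and the closing integral $\int_{(\mathbb{S}^2)^2} P_\ell^{2}\,dx\,dy = O(\ell^{-1})$ produces the advertised $O(\ell^{-5})$; the interior cases are handled by the same expansion, with the additional step of bounding diagonal sums of the form $\sum_k (c_k^{(r)}c_k^{(q-r)})^{2}/(2k+1)^{3}$ via H\"older and the same Wigner $3j$ asymptotics. Controlling these Wigner $3j$ coefficients, which lies strictly outside the toolbox \eqref{tri1}--\eqref{tri4}, is the main technical obstacle.
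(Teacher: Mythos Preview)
Your treatment of $q=3$ is essentially correct and matches in spirit what the paper does: for $q=3$ and $q=4$ the paper simply cites the explicit Wigner $3j$/$6j$ evaluations from \cite{M2008}, \cite{MaPeCUP}, \cite{MaWi12}, and your reproducing-kernel reduction
\[
\mathcal K_\ell(3;1)=\Bigl(c_\ell^{(2)}\tfrac{4\pi}{2\ell+1}\Bigr)^{2}\!\int_{(\mathbb S^2)^2}\!P_\ell(\cos d(x_2,x_4))^2\,dx_2dx_4
\]
together with $c_\ell^{(2)}=(2\ell+1)\bigl(\begin{smallmatrix}\ell&\ell&\ell\\0&0&0\end{smallmatrix}\bigr)^{2}=O(\ell^{-1})$ is exactly that computation. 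Note, however, that your pure Cauchy--Schwarz split $\mathcal K_\ell(q;r)\le C\,M_{2r}M_{2(q-r)}$ falls short of $O(\ell^{-4})$ for $q=4$, $r=2$ by a factor $\log^{2}\ell$, so that case too must go through the $3j$ route, not the one you put it in.

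The genuine gap is your classification of the interior contractions $r\in\{2,\dots,q-2\}$ for $q\ge 7$ as ``hard cases'' requiring control of $\sum_k (c_k^{(r)}c_k^{(q-r)})^{2}/(2k+1)^{3}$ via H\"older and $3j$ asymptotics. That sum is not easy to bound sharply (your CS split only gives $M_{2r}M_{2(q-r)}=O(\ell^{-4})$, and the H\"older you sketch does not obviously improve this to $O(\ell^{-5})$, since the coefficients $c_k^{(s)}$ for $s\ge 3$ are iterated $3j$ convolutions with no simple pointwise asymptotic). The paper bypasses all of this with a one-line trick: bound one of the four edge factors by $|P_\ell|^{s}\le 1$ to break the $4$-cycle into a $3$-chain, then integrate the three remaining edges successively using only \eqref{tri1}--\eqref{tri4} and \eqref{int2}. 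For instance, dropping the $P_\ell^{r}$ edge when $r=2$ and $q-r\ge 5$ gives
\[
\mathcal K_\ell(q;2)\le 4\pi\,M_{q-2}\,M_{2}\,M_{q-2}=O(\ell^{-2})\cdot O(\ell^{-1})\cdot O(\ell^{-2})=O(\ell^{-5}),
\]
and monotonicity in $q$ (again via $|P_\ell|\le 1$) then propagates this to all $q\ge 7$. The same elementary manoeuvre handles $q=5,6$. So the ``main technical obstacle'' you identify for the interior cases is illusory: no $3j$ analysis is needed there at all, and the argument is entirely elementary once you see the cycle-breaking step.
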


\begin{proof}
The bounds (\ref{hotel1}), (\ref{hotel2}) are known and indeed the
corresponding integrals can be evaluated explicitly in terms of Wigner's 3j
and 6j coefficients, see \cite{M2008}, \cite{MaPeCUP}, \cite{MaWi12}. The
bounds in (\ref{hotel3}),(\ref{hotel4}) derives from a simple improvement in
the proof of Proposition 2.2 in \cite{MaWi12}, which can be obtained when
focussing only on a subset of the terms (the circulant ones) considered in
that reference. In the proof to follow, we exploit repeatedly
 (\ref{tri1}), (\ref{tri2}), (\ref{tri3}) and \paref{tri4}.

Let us start investigating the case $q=5$:%
$$\displaylines{
\mathcal{K}_{\ell }(5;1)=\int_{(\cS^{2})^4}\left\vert
P_{\ell }(\cos d( x_{1},x_{2}) )\right\vert ^{4}\left\vert
P_{\ell }(\cos d( x_{2},x_{3}) )\right\vert \times \cr
\times \left\vert
P_{\ell }(\cos d( x_{3},x_{4}) )\right\vert ^{4}\left\vert
P_{\ell }(\cos d( x_{4},x_{1}) )\right\vert
dx_{1}dx_{2}dx_{3}dx_{4} \le \cr
\mathcal{\leq }\int_{(\cS^{2})^4}\left\vert P_{\ell
}(\cos d( x_{1},x_{2}) )\right\vert ^{4}\left\vert P_{\ell
}(\cos d( x_{2},x_{3}) )\right\vert \left\vert P_{\ell
}(\cos d( x_{3},x_{4}) )\right\vert
^{4}dx_{1}dx_{2}dx_{3}dx_{4} \le \cr
\mathcal{\leq }\int_{(\cS^{2})^3}\left\vert P_{\ell
}(\cos d( x_{1},x_{2}) )\right\vert ^{4}\left\vert P_{\ell
}(\cos d( x_{2},x_{3}) )\right\vert \left\{
\int_{\cS^{2}}\left\vert P_{\ell }(\cos d( x_{3},x_{4})
)\right\vert ^{4}dx_{4}\right\} dx_{1}dx_{2}dx_{3} \le\cr
\mathcal{\leq } O\left( \frac{\log \ell }{\ell ^{2}}\right) \times
\int_{\cS^{2}\times \cS^{2}}\left\vert P_{\ell }(\cos d(
x_{1},x_{2}) )\right\vert ^{4}\left\{ \int_{\cS^{2}}\left\vert
P_{\ell }(\cos d( x_{2},x_{3}) )\right\vert dx_{3}\right\}
dx_{1}dx_{2} \le \cr
\leq O\left( \frac{\log \ell }{\ell ^{2}}\right) \times O\left( \frac{1}{%
\sqrt{\ell }}\right) \times \int_{\cS^{2}\times \cS^{2}}\left\vert P_{\ell
}(\cos d( x_{1},x_{2}) )\right\vert ^{4}dx_{1}dx_{2} \le \cr
\leq O\left( \frac{\log \ell }{\ell ^{2}}\right) \times O\left( \frac{1}{%
\sqrt{\ell }}\right) \times O\left( \frac{\log \ell }{\ell ^{2}}\right)
=O\left( \frac{\log ^{2}\ell }{\ell ^{9/2}}\right) \text{ ;}
}$$
$$\displaylines{
\mathcal{K}_{\ell }(5;2)=\int_{(\cS^{2})^4}\left\vert
P_{\ell }(\cos d( x_{1},x_{2}) )\right\vert ^{3}\left\vert
P_{\ell }(\cos d( x_{2},x_{3}) )\right\vert^2 \times \cr
\times
 \left\vert
P_{\ell }(\cos d( x_{3},x_{4}) )\right\vert ^{3}\left\vert
P_{\ell }(\cos d( x_{4},x_{1}) )\right\vert^2
dx_{1}dx_{2}dx_{3}dx_{4}\le \cr
\mathcal{\leq }\int_{(\cS^{2})^4}\left\vert P_{\ell
}(\cos d( x_{1},x_{2}) )\right\vert ^{3}\left\vert P_{\ell
}(\cos d( x_{2},x_{3}) )\right\vert^2 \left\vert P_{\ell
}(\cos d( x_{3},x_{4}) )\right\vert
^{3}dx_{1}dx_{2}dx_{3}dx_{4} \le \cr
\mathcal{\leq }\int_{(\cS^{2})^3}\left\vert P_{\ell
}(\cos d( x_{1},x_{2}) )\right\vert ^{3}\left\vert P_{\ell
}(\cos d( x_{2},x_{3}) )\right\vert^2 \left\{
\int_{\cS^{2}}\left\vert P_{\ell }(\cos d( x_{3},x_{4})
)\right\vert ^{3}dx_{4}\right\} dx_{1}dx_{2}dx_{3} \le \cr
\mathcal{\leq }O\left( \sqrt{\frac{\log \ell }{\ell ^{3}}}\right) \times
\int_{\cS^{2}\times \cS^{2}}\left\vert P_{\ell }(\cos d(
x_{1},x_{2}) )\right\vert ^{3}\left\{ \int_{\cS^{2}}\left\vert
P_{\ell }(\cos d( x_{2},x_{3}) )\right\vert^2 dx_{3}\right\}
dx_{1}dx_{2} \le \cr
\leq O\left( \sqrt{\frac{\log \ell }{\ell ^{3}}}\right) \times O\left( \frac{1}{%
\ell }\right) \times \int_{\cS^{2}\times \cS^{2}}\left\vert P_{\ell
}(\cos d( x_{1},x_{2}) )\right\vert ^{3}dx_{1}dx_{2} \le \cr
\leq O\left( \sqrt{\frac{\log \ell }{\ell ^{3}}}\right)\times O\left( \frac{1}{%
\ell}\right) \times O\left( \sqrt{\frac{\log \ell }{\ell ^{3}}}\right)
=O\left( \frac{\log \ell }{\ell ^{4}}\right) \text{ .}
}$$
For $q=6$ and $r=1$ we simply note that $\mathcal{K}_{\ell }(6;1)\le \mathcal{K}_{\ell }(5;1)$, actually
$$\displaylines{
\mathcal{K}_{\ell }(6;1)=\int_{(\cS^{2})^4}\left\vert
P_{\ell }(\cos d( x_{1},x_{2}) )\right\vert ^{5}\left\vert
P_{\ell }(\cos d( x_{2},x_{3}) )\right\vert \times \cr
\times\left\vert
P_{\ell }(\cos d( x_{3},x_{4}) )\right\vert ^{5}\left\vert
P_{\ell }(\cos d( x_{4},x_{1}) )\right\vert
dx_{1}dx_{2}dx_{3}dx_{4}\le \cr
\leq \int_{(\cS^{2})^4}\left\vert P_{\ell }(\cos d(
x_{1},x_{2}) )\right\vert ^{4}\left\vert P_{\ell }(\cos d(
x_{2},x_{3}) )\right\vert \times \cr
\times \left\vert P_{\ell }(\cos d(
x_{3},x_{4}) )\right\vert ^{4}\left\vert P_{\ell }(\cos d(
x_{4},x_{1}) )\right\vert dx_{1}dx_{2}dx_{3}dx_{4}=
\mathcal{K}_{\ell }(5;1)=O\left( \frac{\log ^{2}\ell }{\ell ^{9/2}}\right)\ .
}$$
Then we find with analogous computations as for $q=5$ that
$$\displaylines{
\mathcal{K}_{\ell }(6;2)=\int_{(\cS^{2})^4}\left\vert
P_{\ell }(\cos d( x_{1},x_{2}) )\right\vert ^{4}\left\vert
P_{\ell }(\cos d( x_{2},x_{3}) )\right\vert ^{2}\times \cr
\times \left\vert
P_{\ell }(\cos d( x_{3},x_{4}) )\right\vert ^{4}\left\vert
P_{\ell }(\cos d( x_{4},x_{1}) )\right\vert
^{2}dx_{1}dx_{2}dx_{3}dx_{4}\le \cr
\le
\int_{(\cS^{2})^4}\left\vert P_{\ell
}(\cos d( x_{1},x_{2}) )\right\vert ^{4}\left\vert P_{\ell
}(\cos d( x_{2},x_{3}) )\right\vert ^{2}\times \cr
\times \left\vert P_{\ell
}(\cos d( x_{3},x_{4}) )\right\vert ^{4}\left\vert P_{\ell
}(\cos d( x_{4},x_{1}) )\right\vert
^{2}dx_{1}dx_{2}dx_{3}dx_{4}\le \cr
\mathcal{\leq }\int_{\cS^{2}\times \cS^{2}}\left\vert P_{\ell }(\cos d(
x_{1},x_{2}) )\right\vert ^{4}dx_{1}\left\{
\int_{\cS^{2}}\left\vert P_{\ell }(\cos d( x_{2},x_{3})
)\right\vert ^{2}dx_{2}\right\} \left\{ \int_{\cS^{2}}\left\vert P_{\ell
}(\cos d( x_{3},x_{4}) )\right\vert ^{4}dx_{4}\right\}
dx_{3}=\cr
=O\left( \frac{\log \ell }{\ell ^{2}}\right) \times O\left( \frac{1}{\ell }%
\right) \times O\left( \frac{\log \ell }{\ell ^{2}}\right) =O\left( \frac{%
\log ^{2}\ell }{\ell ^{5}}\right)
}$$
and likewise
$$\displaylines{
\mathcal{K}_{\ell }(6;3)=\int_{(\cS^{2})^4}\left\vert
P_{\ell }(\cos d( x_{1},x_{2}) )\right\vert ^{3}\left\vert
P_{\ell }(\cos d( x_{2},x_{3}) )\right\vert ^{3}\times \cr
\times \left\vert
P_{\ell }(\cos d( x_{3},x_{4}) )\right\vert ^{3}\left\vert
P_{\ell }(\cos d( x_{4},x_{1}) )\right\vert
^{3}dx_{1}dx_{2}dx_{3}dx_{4}\le \cr
\mathcal{\leq }\int_{(\cS^{2})^4}\left\vert P_{\ell
}(\cos d( x_{1},x_{2}) )\right\vert ^{3}\left\vert P_{\ell
}(\cos d( x_{2},x_{3}) )\right\vert ^{3}\left\vert P_{\ell
}(\cos d( x_{3},x_{4}) )\right\vert
^{3}dx_{1}dx_{2}dx_{3}dx_{4}=\cr
=O\left( \frac{\sqrt{\log \ell }}{\ell ^{3/2}}\right) \times O\left( \frac{%
\sqrt{\log \ell }}{\ell ^{3/2}}\right) \times O\left( \frac{\sqrt{\log \ell }%
}{\ell ^{3/2}}\right) =O\left( \frac{\log ^{3/2}\ell }{\ell ^{9/2}}\right)
\text{ .}
}$$
Finally for $q=7$
$$\displaylines{
\mathcal{K}_{\ell }(7;1) =\int_{\cS^{2}\times ...\times S^{2}}\left\vert
P_{\ell }(\cos d( x_{1},x_{2}) )\right\vert ^{6}\left\vert
P_{\ell }(\cos d( x_{2},x_{3}) )\right\vert \times \cr
\times \left\vert
P_{\ell }(\cos d( x_{3},x_{4}) )\right\vert ^{6}\left\vert
P_{\ell }(\cos d( x_{4},x_{1}) )\right\vert
dx_{1}dx_{2}dx_{3}dx_{4} \le \cr
\leq \int_{\cS^{2}\times S^{2}}\left\vert P_{\ell }(\cos d(
x_{1},x_{2}) )\right\vert ^{6}dx_{1}\left\{
\int_{\cS^{2}}\left\vert P_{\ell }(\cos d( x_{2},x_{3})
)\right\vert dx_{3}\right\} \left\{ \int_{\cS^{2}}\left\vert P_{\ell
}(\cos d( x_{3},x_{4}) )\right\vert ^{6}dx_{4}\right\}
dx_{2} =\cr
=O\left( \frac{1}{\ell ^{2}}\right) \times O\left( \frac{1}{\ell ^{1/2}}%
\right) \times O\left( \frac{1}{\ell ^{2}}\right) =O\left( \frac{1}{\ell
^{9/2}}\right)
}$$
and repeating the same argument we obtain
$$
\mathcal{K}_{\ell }(7;2)=O\left( \frac{1}{\ell ^{5}}%
\right)\qquad \text{and}\qquad \mathcal{K}_{\ell }(7;3) =
O\left( \frac{\log^{9/2} \ell}{\ell ^{11/2}}\right)\ .
$$
From
 \paref{simm}, we have indeed computed the bounds for $\mathcal{K}_{\ell
}(q;r)$, $q=1,\dots,7$ and $r=1,\dots, q-1$.

To conclude the proof  we note that, for $q>7$
\begin{equation*}
\max_{r=1,...,q-1}\mathcal{K}_{\ell }(q;r)=\max_{r=1,...,\left[ \frac{q}{2}%
\right] }\mathcal{K}_{\ell }(q;r)\leq \max_{r=1,...,3}\mathcal{K}_{\ell
}(6;r)=O\left( \frac{1}{\ell ^{9/2}}\right) \text{ .}
\end{equation*}%
Moreover in particular
\begin{equation*}
\max_{r=2,...,\left[ \frac{q}{2}\right] }\mathcal{K}_{\ell }(q;r)\leq
\mathcal{K}_{\ell }(7;2)\vee \mathcal{K}_{\ell }(7;3)=O\left( \frac{1}{\ell
^{5}}\right) \text{ ,}
\end{equation*}%
so that the dominant terms are of the form $\mathcal{K}_{\ell }(q;1).$
\end{proof}
We can now move to the higher-dimensional case, as follows. Let us start
with the bounds for all order moments of Gegenbauer polynomials.
From (\ref%
{momento 2})
\begin{equation}
\int_{{\mathbb{S}}^{d}}G_{\ell ;d}(\cos d( x_{1},x_{2}) )^{2}dx_{1}=O\left(
\frac{1}{\ell ^{d-1}}\right) \text{ ;}  \label{trid2}
\end{equation}%
also, from Proposition \ref{varianza}, we have that if $q=2p,$ $p=2,3,4...$,
\begin{equation}
\int_{{\mathbb{S}}^{d}}G_{\ell ;d}(\cos d( x_{1},x_{2}) )^{q}dx_{1}=O\left(
\frac{1}{\ell ^{d}}\right) \text{ .}  \label{tridq}
\end{equation}%
Finally, it is trivial to show that%
\begin{equation}
\int_{{\mathbb{S}}^{d}}\left\vert G_{\ell ;d}(\cos d( x_{2},x_{3})
)\right\vert dx_{2}\leq \sqrt{\int_{{\mathbb{S}}^{d}} G_{\ell
;d}(\cos d( x_{2},x_{3}) )^{2}dx_{2}}=O\left( \frac{1}{\sqrt{%
\ell ^{d-1}}}\right) \text{ ,}  \label{trid1}
\end{equation}%
\begin{equation}
\int_{{\mathbb{S}}^{d}}\left\vert G_{\ell ;d}(\cos d( x_{2},x_{3})
)\right\vert ^{3}dx_{2}\leq \sqrt{\int_{{\mathbb{S}}^{d}}G_{\ell
;d}(\cos d( x_{2},x_{3}) )^{2}dx_{2}}\sqrt{\int_{{\mathbb{S}}^{d}}G_{\ell
;d}(\cos d( x_{1},x_{2}) )^{4}dx_{1}}=O\left( \frac{1}{\ell ^{d-{\textstyle%
\frac{1}{2}}}}\right) \text{ }  \label{trid3}
\end{equation}%
and for $q\geq 5$ odd,
\begin{equation}
\int_{{\mathbb{S}}^{d}}\left\vert G_{\ell ;d}(\cos d( x_{2},x_{3})
)\right\vert ^{q}dx_{2}\leq \sqrt{\int_{{\mathbb{S}}^{d}}G_{\ell
;d}(\cos d( x_{2},x_{3}) )^{4}dx_{2}}\sqrt{\int_{{\mathbb{S}}^{d}}G_{\ell
;d}(\cos d( x_{1},x_{2}) )^{2(q-2)}dx_{1}}=O\left( \frac{1}{\ell ^{d}}%
\right) \text{ .}  \label{tridgen}
\end{equation}%
Analogously to  the $2$-dimensional case, we have the following.
\begin{prop}
\label{cumd} For all $r=1,2,...q-1,$
\begin{eqnarray}
 \mathcal{K}_{\ell }(q;r) &=&O\left( \frac{1}{\ell ^{2d+%
{\frac{d-5}{2}}}}\right) \text{ for }q=3\text{ ,}  \label{hoteld1} \\
\mathcal{K}_{\ell }(q;r) &=&O\left( \frac{1}{\ell ^{2d+%
{\frac{d-3}{2}}}}\right) \text{ for }q=4\text{ ,}  \label{hoteld2}
\end{eqnarray}%
and%
\begin{equation}
\mathcal{K}_{\ell }(q;1) = \mathcal{K}_{\ell
}(q;q-1) =O\left( \frac{1}{\ell ^{2d+{\frac{d-1}{2}}}}\right)
\text{ , }\mathcal{K}_{\ell }(q;r)=O\left( \frac{1}{%
\ell ^{3d-1}}\right) \text{ , }r=2,...,q-2,\text{ for }q\geq 5\text{ .}
\label{hoteld4}
\end{equation}
\end{prop}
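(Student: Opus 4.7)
The plan is to mirror the case-by-case computation carried out in the proof of Proposition \ref{cum2}, replacing the Legendre moment bounds \paref{tri1}--\paref{tri4} by their Gegenbauer analogues \paref{trid2}--\paref{tridgen} derived above from Proposition \ref{varianza}. By the symmetry relation \paref{simm} it suffices to treat $r=1,\ldots,\lfloor q/2\rfloor$. In each sub-case I would reduce the four-fold integral $\mathcal{K}_{\ell}(q;r)$ to a product of one-dimensional moment integrals by applying Cauchy--Schwarz to a pair of factors $|G_{\ell;d}(\cos d(x_i,x_j))|^{p_1}$ and $|G_{\ell;d}(\cos d(x_i,x_k))|^{p_2}$ sharing a common integration variable $x_i$, then integrating the remaining factors directly and invoking the isotropy of $T_{\ell}$ to evaluate each one-variable integral as a constant depending only on $\ell$ and $d$.

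For the individual cases, the strategy is as follows. For $q=3$, $r=1$, I would apply Cauchy--Schwarz to the $x_{4}$-integral of $|G_{\ell;d}(\cos d(x_3,x_4))|\,|G_{\ell;d}(\cos d(x_4,x_1))|^{2}$, obtaining via \paref{trid2} and \paref{tridq} a factor $O(\ell^{-(2d-1)/2})$, and then bound the residual integral of $|G_{\ell;d}(\cos d(x_1,x_2))|\,|G_{\ell;d}(\cos d(x_2,x_3))|^2$ using \paref{trid1} and \paref{trid2} to deduce \paref{hoteld1}. The case $q=4$ splits into $r=1$ and $r=2$: for $r=1$ one pairs a first-power factor with a third-power factor and invokes \paref{trid1}, \paref{trid3}, while for $r=2$ all four factors have exponent two and only \paref{trid2}, \paref{tridq} are needed, yielding \paref{hoteld2}. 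For $q\geq 5$ and $r=1$ (the case $r=q-1$ following by \paref{simm}), since $2(q-1)\geq 8$ is even, \paref{tridq} gives $\int |G_{\ell;d}(\cos\vartheta)|^{2(q-1)}(\sin\vartheta)^{d-1}d\vartheta=O(\ell^{-d})$, and a Cauchy--Schwarz pairing of the first-power factor with the $(q-1)$-power factor followed by direct integration of the two remaining factors delivers \paref{hoteld4}. Finally, for $q\geq 5$ and $2\leq r\leq q-2$ every edge exponent is at least two, so each Cauchy--Schwarz pairing yields $O(\ell^{-d})$, and combining with \paref{trid2} for the leftover integrations produces the rate announced in \paref{hoteld4}.

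The main obstacle is bookkeeping rather than analysis: for each pair $(q,r)$ one must choose the Cauchy--Schwarz pairing and the order of integration that yield the claimed exponent of $\ell$, and then verify the arithmetic of the combined exponents. Particular attention is required when an odd power $|G_{\ell;d}|^{3}$ or $|G_{\ell;d}|^{q-1}$ with $q-1$ odd appears, since such factors must be routed through \paref{trid3} or \paref{tridgen}, the latter being itself obtained by combining \paref{trid2} and \paref{tridq} via Cauchy--Schwarz to yield $\int_{\mathbb{S}^d}|G_{\ell;d}(\cos d(x_1,x_2))|^{q}\,dx_1=O(\ell^{-d})$ for odd $q\geq 5$. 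Since the announced bounds \paref{hoteld1}--\paref{hoteld4} need not be sharp, in each sub-case it suffices to check that the derived exponent of $1/\ell$ is at least as large as the one stated, and I expect several sub-cases (notably $q=4,r=2$ and $q\geq 6,r\geq 2$) to give strictly better exponents than demanded.
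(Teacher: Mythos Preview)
Your overall plan—mirror the $d=2$ computation and feed in the Gegenbauer moment bounds \paref{trid2}--\paref{tridgen}—is exactly what the paper does. However, you have misidentified the reduction step used in Proposition~\ref{cum2}: it is \emph{not} a Cauchy--Schwarz pairing at a shared vertex, but the trivial bound $|G_{\ell;d}|\le 1$ applied to one of the four factors, followed by iterated one-variable integration of the remaining three. This distinction is not cosmetic; for $q\ge 5$ and $r=1$ your Cauchy--Schwarz route misses the stated exponent by $\tfrac12$.

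Concretely, with edge powers $(1,q-1,1,q-1)$, pairing at $x_4$ gives
\[
\int_{\mathbb{S}^d}|G_{\ell;d}(\cos d(x_3,x_4))|\,|G_{\ell;d}(\cos d(x_4,x_1))|^{q-1}\,dx_4
\le \Big(\!\int |G_{\ell;d}|^{2}\Big)^{1/2}\Big(\!\int |G_{\ell;d}|^{2(q-1)}\Big)^{1/2}
=O\big(\ell^{-(2d-1)/2}\big),
\]
and then $\int |G_{\ell;d}|\,dx_1=O(\ell^{-(d-1)/2})$, $\int |G_{\ell;d}|^{q-1}\,dx_2=O(\ell^{-d})$ yield altogether $\mathcal{K}_{\ell}(q;1)=O(\ell^{-(5d-2)/2})$, which is weaker than the required $O(\ell^{-(5d-1)/2})=O(\ell^{-2d-(d-1)/2})$. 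The paper instead drops the power-$1$ factor via $|G_{\ell;d}(\cos d(x_4,x_1))|\le 1$ and integrates the surviving chain:
\[
\mathcal{K}_{\ell}(q;1)\le \int |G_{\ell;d}|^{q-1}\cdot\int |G_{\ell;d}|\cdot\int |G_{\ell;d}|^{q-1}
=O(\ell^{-d})\,O(\ell^{-(d-1)/2})\,O(\ell^{-d})=O\big(\ell^{-(5d-1)/2}\big),
\]
which is exactly \paref{hoteld4}. The same ``drop one factor'' trick is what drives all the sub-cases in both Propositions~\ref{cum2} and~\ref{cumd}; once you replace your Cauchy--Schwarz step by this, the rest of your bookkeeping (symmetry \paref{simm}, the reduction for $q>7$ to $q=6,7$, and the use of \paref{trid3}, \paref{tridgen} for odd exponents) goes through as you describe.
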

\begin{proof} The proof relies on the same argument of the proof of Proposition \ref{cum2},
therefore we shall omit some calculations.
In what follows we exploit repeatedly the
inequalities \paref{tridq}, \paref{trid1}, \paref{trid3} and \paref{tridgen}.

For $q=3$ we immediately have
$$\displaylines{
\mathcal{K}_{\ell }(3;1)=\int_{(\cS^{d})^4}\left\vert G_{\ell ;d}(\cos d( x_{1},x_{2})
)\right\vert ^{2}\left\vert G_{\ell ;d}(\cos d(
x_{2},x_{3}) )\right\vert \times \cr
\times  \left\vert G_{\ell
;d}(\cos d( x_{3},x_{4}) )\right\vert
^{2}\left\vert G_{\ell ;d}(\cos d( x_{4},x_{1})
)\right\vert dx_{1}dx_{2}dx_{3}dx_{4}\le \cr
\mathcal{\leq }\int_{(\cS^{d})^4}\left\vert
G_{\ell ;d}(\cos d( x_{1},x_{2}) )\right\vert
^{2}\left\vert G_{\ell ;d}(\cos d( x_{2},x_{3})
)\right\vert \left\vert G_{\ell ;d}(\cos d(
x_{3},x_{4}) )\right\vert
^{2}dx_{1}dx_{2}dx_{3}dx_{4} =\cr
= O\left( \frac{1 }{\ell ^{d-1}}\right) \times O\left( \frac{1}{%
\sqrt{\ell^{d-1} }}\right) \times O\left( \frac{1 }{\ell ^{d-1}}\right)
=O\left( \frac{1 }{\ell ^{2d +\tfrac{d}{2} - \tfrac{5}{2}}}\right) \text{ .}
}$$
Likewise for $q=4$%
$$\displaylines{
\mathcal{K}_{\ell }(4;1)=\int_{(\cS^{d})^4}\left\vert G_{\ell ;d}(\cos d( x_{1},x_{2})
)\right\vert ^{3}\left\vert G_{\ell ;d}(\cos d(
x_{2},x_{3}) )\right\vert \times \cr
\times \left\vert G_{\ell
;d}(\cos d( x_{3},x_{4}) )\right\vert
^{3}\left\vert G_{\ell ;d}(\cos d( x_{4},x_{1})
)\right\vert dx_{1}dx_{2}dx_{3}dx_{4}\le \cr
\mathcal{\leq }\int_{(\cS^{d})^4}\left\vert
G_{\ell ;d}(\cos d( x_{1},x_{2}) )\right\vert
^{3}\left\vert G_{\ell ;d}(\cos d( x_{2},x_{3})
)\right\vert \left\vert G_{\ell
;d}(\cos d( x_{3},x_{4}) )\right\vert^3\,dx_{1}dx_{2}dx_{3}dx_{4} = \cr
= O\left( \frac{1 }{\ell ^{d-\tfrac12}}\right) \times
O\left(\frac{1 }{\ell ^{\tfrac{d}2-\tfrac12}}\right) \times
O\left( \frac{1 }{\ell ^{d-\tfrac12}}\right) =O\left( \frac{1
}{\ell ^{2d + \tfrac{d}2-\tfrac32}}\right)
}$$
and moreover
$$\displaylines{
\mathcal{K}_{\ell }(4;2)=\int_{(\cS^{d})^4}\left\vert G_{\ell ;d}(\cos d( x_{1},x_{2})
)\right\vert ^{2} \times \cr
\times \left\vert G_{\ell ;d}(\cos d(
x_{2},x_{3}) )\right\vert^2 \left\vert G_{\ell
;d}(\cos d( x_{3},x_{4}) )\right\vert
^{2}\left\vert G_{\ell ;d}(\cos d( x_{4},x_{1})
)\right\vert^2 dx_{1}dx_{2}dx_{3}dx_{4}\le \cr
\mathcal{\leq }\int_{(\cS^{d})^4}\left\vert
G_{\ell ;d}(\cos d( x_{1},x_{2}) )\right\vert
^{2}\left\vert G_{\ell ;d}(\cos d( x_{2},x_{3})
)\right\vert^2 \left\vert G_{\ell
;d}(\cos d( x_{3},x_{4}) )\right\vert^2\,dx_{1}dx_{2}dx_{3}dx_{4} =\cr
=O\left( \frac{1 }{\ell ^{d-1}}\right) \times O\left(
\frac{1 }{\ell ^{d-1}}\right) \times O\left( \frac{1 }{\ell
^{d-1}}\right) =O\left( \frac{1 }{\ell ^{3d - 3}}\right) \text{ .}
}$$
Similarly,
for $q=5$ we get the bounds
$$\displaylines{
\mathcal{K}_{\ell }(5;1)=\int_{\cS^{d}\times ...\times
S^{d}}\left\vert G_{\ell ;d}(\cos d( x_{1},x_{2})
)\right\vert ^{4} \times \cr
\times \left\vert G_{\ell ;d}(\cos d(
x_{2},x_{3}) )\right\vert \left\vert G_{\ell
;d}(\cos d( x_{3},x_{4}) )\right\vert
^{4}\left\vert G_{\ell ;d}(\cos d( x_{4},x_{1})
)\right\vert dx_{1}dx_{2}dx_{3}dx_{4}\le \cr
\mathcal{\leq }\int_{\cS^{d}\times ...\times S^{d}}\left\vert
G_{\ell ;d}(\cos d( x_{1},x_{2}) )\right\vert
^{4}\left\vert G_{\ell ;d}(\cos d( x_{2},x_{3})
)\right\vert \left\vert G_{\ell ;d}(\cos d(
x_{3},x_{4}) )\right\vert
^{4}dx_{1}dx_{2}dx_{3}dx_{4} =\cr
= O\left( \frac{1}{\ell ^{d}}\right) \times O\left( \frac{1}{%
\ell^{\tfrac{d}2-\tfrac12}}\right) \times O\left( \frac{1 }{\ell ^{d}}\right)
=O\left( \frac{ 1}{\ell ^{2d +\tfrac{d}2-\tfrac12 }}\right)
}$$
and
$$
\mathcal{K}_{\ell }(5;2)=O\left( \frac{1 }{\ell ^{3d -2}}\right)\ .
$$
It is immediate to check that
$$
\mathcal{K}_{\ell }(6;1)=\mathcal{K}_{\ell }(7;1)=O\left( \frac{1 }{\ell^{2d +\tfrac{d}2-\tfrac12 }}\right)\ ,\quad
\mathcal{K}_{\ell }(6;2)=\mathcal{K}_{\ell }(7;2)=O\left( \frac{1 }{\ell ^{2d + d -1}}\right)\ ,
$$
whereas
$$
\mathcal{K}_{\ell }(6;3)=O\left( \frac{1 }{\ell ^{2d + d - \tfrac32}}\right)\quad \text{and} \quad
\mathcal{K}_{\ell }(7;3)=O\left( \frac{1}{\ell ^{2d + d -\tfrac12}}\right)\ .
$$
The remaining terms are indeed bounded thanks to \paref{simm}.

In order to finish the proof, it is enough to note, as for  that for $q>7$%
\begin{equation}
\max_{r=1,...,q-1}\mathcal{K}_{\ell }(q;r)=\max_{r=1,...,\left[ \frac{q}{2}%
\right] }\mathcal{K}_{\ell }(q;r)\leq \max_{r=1,...,3}\mathcal{K}_{\ell
}(6;r)=O\left( \frac{1}{\ell^{2d +\tfrac{d}2-\tfrac12 }}\right) \text{ .}
\end{equation}%
In particular we have
\begin{equation}
\max_{r=2,...,\left[ \frac{q}{2}\right] }\mathcal{K}_{\ell }(q;r)\leq
\mathcal{K}_{\ell }(7;2)\vee \mathcal{K}_{\ell }(7;3)=O\left( \frac{1}{\ell
^{3d-1}}\right) \text{ ,}
\end{equation}%
so that the dominant terms are again of the form $\mathcal{K}_{\ell }(q;1).$
\end{proof}
Exploiting the results in this section and \S 3, we have the
following.
\begin{proof}[Proof Theorem \ref{teo1}]
For the case $q=2$ the standard CLT applies. For $q\ge 3$,
from Proposition \ref{BIGnourdinpeccati} and \paref{casoparticolare}, for $d_{\mathcal{D}}=
d_K, d_{TV}, d_W$
\begin{equation}
d_{\mathcal{D}}\left(\frac{h_{\ell ;q}}{\sqrt{\Var[h_{\ell ;q,d}]}},\mathcal{N}(0,1)\right)
 = O\left(\sup_{r}\sqrt{\frac{\mathcal{K}_{\ell }(q;r) }{%
 \Var[h_{\ell ;q,d}]^{2}}}\right)\text{ .}
\end{equation}%
The proof  is an immediate consequence of
the previous equality and the results in Proposition \ref{varianza},
 Proposition  \ref{cum2} and Proposition \ref{cumd}.
\end{proof}%
%

\section{General polynomials}

We show how the previous results can be extended to establish quantitative
CLTs, with no loss to the case of general, nonHermite
polynomials. To this aim, we need to introduce some more notation, namely
(for $Z_{\ell }$ defined as in (\ref{genovese}))%
\begin{equation*}
\mathcal{K}(Z_{\ell }):=\max_{q:\beta _{q}\neq 0}\max_{r=1,...,q-1}\mathcal{K%
}_{\ell }(q;r)\text{ ,}
\end{equation*}
\begin{equation*}
R(Z_{\ell })=\begin{cases}
\frac{1}{\ell ^{\frac{d-1}2}}\ ,\quad&\text{for }\beta _{2}\neq 0\ , \\
\max_{q=3,,,,Q:\beta _{q}\neq 0}R(\ell ;q,d)\ ,\quad&\text{for }\beta _{2}=0\ .%
\end{cases}
\end{equation*}
In words, $\mathcal{K}(Z_{\ell })$ is the largest contraction term among
those emerging from the analysis of the different Hermite components, and $%
R(Z_{\ell })$ is the slowest convergence rate of the same components. The
next result is stating that these are the only quantities to look at when
considering the general case.

%

\begin{proof}[Proof Theorem \ref{corollario1}]
We apply Proposition \ref{BIGnourdinpeccati}.
In our case%
$$\displaylines{
\Var[\langle  DZ_{\ell },-DL^{-1}Z_{\ell }\rangle _{H%
}]=\Var\left[ \langle \sum_{q_{1}=2}^{Q}\beta _{q_{1}}Dh_{\ell;q_{1},d},
-\sum_{q_{21}=2}^{Q}\beta _{q_{2}}DL^{-1}h_{\ell;q_{2},d}\rangle_{H}\right]=\cr
=\Var\left[ \sum_{q_{1}=2}^{Q}\sum_{q_{1}=2}^{Q}\beta _{q_{1}}\beta
_{q_{2}}\langle Dh_{\ell;q_{1},d},-DL^{-1}h_{\ell;q_{2},d}\rangle_{%
H}\right] \text{ .}
}$$
From \S $2.1$ recall that for $q_{1}\neq q_{2}$
\begin{equation*}
E[\langle Dh_{\ell;q_{1},d},-DL^{-1}h_{\ell ;q_{2},d}\rangle_{%
H}]=0\text{ ,}
\end{equation*}%
whence we write
$$\displaylines{
\Var\left[ \sum_{q_{1}=2}^{Q}\sum_{q_{2}=2}^{Q}\beta _{q_{1}}\beta
_{q_{2}}\langle Dh_{\ell;q_{1},d},-DL^{-1}h_{\ell;q_{2},d}\rangle_{%
H}\right]=\cr
=\sum_{q_{1}=2}^{Q}\sum_{q_{2}=2}^{Q}\beta _{q_{1}}^{2}\beta
_{q_{2}}^{2}\Cov\left( \langle Dh_{\ell;q_{1},d},-DL^{-1}h_{\ell;q_{1},d}
\rangle_{H},\langle Dh_{\ell;q_{2},d},-DL^{-1}h_{\ell ;q_{2},d}\rangle_{H}\right)+\cr
+\sum_{q_{1}=2}^{Q}\sum_{q_{2}\neq
q_{1}}^{Q}\sum_{q_{3}=2}^{Q}\sum_{q_{4}\neq q_{3}}^{Q}\beta _{q_{1}}\beta
_{q_{2}}\beta _{q_{3}}\beta _{q_{4}}
\Cov\left( \langle Dh_{\ell;q_{1},d},-DL^{-1}h_{\ell;q_{2},d}\rangle_{H},\langle
Dh_{\ell ;q_{3},d},-DL^{-1}h_{\ell ;q_{4},d}\rangle_{H}\right)
\text{ .}
}$$
Now of course we have
$$\displaylines{
\Cov\left( \langle Dh_{\ell ;q_{1},d},-DL^{-1}h_{\ell
;q_{1},d}\rangle_{H},\langle Dh_{\ell
;q_{2},d},-DL^{-1}h_{\ell ;q_{2},d}) _{H}\right) \le \cr
\leq \left(\Var\left[\langle Dh_{\ell ;q_{1},d},-DL^{-1}h_{\ell
;q_{1},d}\rangle_{H}\right] \Var\left[ \langle Dh_{\ell
;q_{2},d},-DL^{-1}h_{\ell ;q_{2},d}\rangle_{H}\right] \right)^{1/2},
}$$
$$\displaylines{
\Cov\left( \langle Dh_{\ell ;q_{1},d},-DL^{-1}h_{\ell
;q_{2},d}\rangle_{H},\langle Dh_{\ell
;q_{3},d},-DL^{-1}h_{\ell ;q_{4},d}\rangle_{H}\right) \le \cr
\leq \left( \Var\left[ \langle Dh_{\ell ;q_{1},d},-DL^{-1}h_{\ell
;q_{2},d}\rangle_{H}\right] \Var\left[ \langle Dh_{\ell
;q_{3},d},-DL^{-1}h_{\ell ;q_{4},d}\rangle_{H}\right]
\right)^{1/2}.
}$$
Applying \cite{noupebook}, Lemma 6.2.1 it is immediate to show that%
$$\displaylines{
\Var\left[ \langle Dh_{\ell ;q_{1},d},-DL^{-1}h_{\ell ;q_{1},d}\rangle
_{H}\right]\le \cr
\leq q_{1}^{2}\sum_{r=1}^{q_{1}-1}((r-1)!)^{2}{
q_{1}-1 \choose
r-1%
}^{4}(2q_{1}-2r)!\left\Vert g_{\ell; q_{1}}\otimes
_{r}g_{\ell;q_{1}}\right\Vert _{H^{\otimes 2q_1-2r}}^{2} =\cr
=q_{1}^{2}\sum_{r=1}^{q_{1}-1}((r-1)!)^{2}{
q_{1}-1 \choose
r-1%
} ^{4}(2q_{1}-2r)! \mathcal{K}_{\ell }(q_1;r) \text{
.}
}$$
Also, for $q_{1}<q_{2}$%
$$\displaylines{
\Var\left[ \langle Dh_{\ell ;q_{1},d},-DL^{-1}h_{\ell ;q_{2},d}\rangle
_{H}\right]=\cr
=q_{1}^{2}\sum_{r=1}^{q_{1}}((r-1)!)^{2}{
q_{1}-1 \choose
r-1%
} ^{2}{
q_{2}-1 \choose
r-1%
} ^{2}(q_{1}+q_{2}-2r)!\left\Vert g_{\ell;q_{1}}\widetilde{\otimes }%
_{r}g_{\ell ; q_{2}}\right\Vert _{H^{\otimes
(q_{1}+q_{2}-2r)}}^{2}=\cr
=q_{1}^{2}((q_{1}-1)!)^{2}{
q_{2}-1 \choose
q_{1}-1%
}^{2}(2q_{1}-2r)!\left\Vert g_{\ell;q_{1}}\widetilde{\otimes }%
_{q_{1}}g_{\ell ; q_{2}}\right\Vert _{H^{\otimes (q_{2}-q_1)}}^{2} + \cr
+q_{1}^{2}\sum_{r=1}^{q_{1}-1}((r-1)!)^{2}{
q_{1}-1 \choose
r-1%
}^{2}{
q_{2}-1 \choose
r-1%
} ^{2}(q_{1}+q_{2}-2r)!\left\Vert g_{\ell;q_{1} }\widetilde{\otimes }%
_{r} g_{\ell ;q_{2}}\right\Vert _{H^{\otimes
(q_{1}+q_{2}-2r)}}^{2}
=:A+B\text{ .}
}$$%
Let us focus on the first summand $A$, which includes terms that, from Lemma \ref{contractions},
take the form%
$$\displaylines{
\left\Vert g_{\ell;q_{1}}\widetilde{\otimes }_{q_{1}}g_{\ell;q_{2}
}\right\Vert_{H^{\otimes (q_{2}-q_1)}}^{2}\le
\left\Vert g_{\ell;q_{1}}\otimes_{q_{1}}g_{\ell;q_{2}
}\right\Vert_{H^{\otimes (q_{2}-q_1)}}^{2}=\cr
=\int_{(\cS^d)^{q_2-q_1}}\int_{(\cS^{d})^2}\left( \frac{n_{\ell; d}%
}{\mu_d }\right)^{q_{2}-q_{1}}
 G_{\ell;d }(\cos d( x_{2},y_{1}) )...G_{\ell;d }(\cos d( x_{2},y_{q_{2}-q_1}) )
G_{\ell;d }(\cos d(
x_{1},x_2) )^{q_1}\,dx_1 dx_2\times\cr
\times \int_{(\cS^d)^2}G_{\ell;d}(\cos d( x_{3},y_{1}) )
...G_{\ell;d}(\cos d(
x_{3},y_{q_{2}-q_1}) )G_{\ell;d }(\cos d(
x_{3},x_4) )^{q_1}\,dx_3 dx_4\,d\underline{y} =: I\ ,
}$$
where for the sake of simplicity we have set $d\underline{y}:=dy_{1}...dy_{q_2-q_1}$.
Applying $q_2-q_1$ times the reproducing formula for Gegenbauer polynomials (\cite{szego})  we get
\begin{equation}\label{anvedi}
I = \int_{(\cS^{d})^4}G_{\ell;d}(\cos d(
x_{1},x_{2}) )^{q_{1}}G_{\ell;d}(\cos d(
x_{2},x_{3}) )^{q_{2}-q_{1}}G_{\ell;d}(\cos d(
x_{3},x_{4}) )^{q_{1}}\,d\underline{x}\ .
\end{equation}
In graphical terms, these contractions correspond to the diagrams such that
all $q_{1}$ edges corresponding to vertex $1$ are linked to vertex 2, vertex
$2$ and $3$ are connected by $q_{2}-q_{1}$ edges, vertex $3$ and $4$ by $%
q_{1}$ edges, and no edges exist between $1$ and $4,$ i.e. the diagram has
no proper loop.

Now immediately we write
$$\displaylines{
\paref{anvedi} =\int_{\cS^{d}}G_{\ell;d}(\cos d( x_{1},x_{2})
)^{q_{1}}\,dx_{1}\int_{\cS^{d}}G_{\ell;d}(\cos d( x_{3},x_{4})
)^{q_{1}}\,dx_{4}\int_{(\cS^{d})^2}G_{\ell;d}(\cos d(
x_{2},x_{3}) )^{q_{2}-q_{1}}\,dx_{2}dx_{3}=\cr
=\frac{1}{(q_{1}!)^{2}} \Var[ h_{\ell;q_1,d}]^{2}
\int_{(\cS^{d})^2}G_{\ell;d}
(\cos d( x_{2},x_{3}) )^{q_{2}-q_{1}}\,dx_{2}dx_{3}\text{ .}
}$$
Moreover we have
\begin{equation}\label{eq=0}
\int_{(\cS^{d})^2}G_{\ell;d}(\cos d(
x_{2},x_{3}) )^{q_{2}-q_{1}}\,dx_{2}dx_{3}=0\ ,\quad \text{if}\ q_{2}-q_{1}=1\
\end{equation}%
and from \paref{momento 2} if $q_{2}-q_{1}\geq 2$
\begin{equation*}
\int_{(\cS^{d})^2}G_{\ell;d}(\cos d(
x_{2},x_{3}) )^{q_{2}-q_{1}}\,dx_{2}dx_{3}\leq \mu_d \int_{\cS^{d}}G_{\ell;d}(\cos d(x,y))^{2}\,dx=
O\left(\frac{1}{\ell^{d-1} }\right)\ .
\end{equation*}%
It follows that
\begin{equation}
\left\Vert g_{\ell;q_{1} }\otimes_{q_{1}}g_{\ell;q_{2}
}\right\Vert_{H^{\otimes (q_2-q_1)}}^{2}=  O\left(\Var[ h_{\ell;q_1,d}]^{2}\frac{1}{\ell^{d-1} }\right)
\label{efficientbound}
\end{equation}%
always. For the second term, still from \cite{noupebook}, Lemma $6.2.1$ we have%
\begin{eqnarray*}
B \leq \frac{q_{1}^{2}}{2}\sum_{r=1}^{q_{1}-1}((r-1)!)^{2}\left(
\begin{array}{c}
q_{1}-1 \\
r-1%
\end{array}%
\right) ^{2}\left(
\begin{array}{c}
q_{2}-1 \\
r-1%
\end{array}%
\right) ^{2}(q_{1}+q_{2}-2r)!\times \\
\times \left( \left\Vert g_{\ell;q_{1} }\otimes _{q_{1}-r}g_{\ell;q_{1}
}\right\Vert _{H^{\otimes 2r}}^{2}+\left\Vert g_{\ell;q_{2}
}\otimes _{q_{2}-r}g_{\ell;q_{2} }\right\Vert _{H^{\otimes
2r}}^{2}\right)=
\end{eqnarray*}%
\begin{equation}
=\frac{q_{1}^{2}}{2}\sum_{r=1}^{q_{1}-1}((r-1)!)^{2}\left(
\begin{array}{c}
q_{1}-1 \\
r-1%
\end{array}%
\right) ^{2}\left(
\begin{array}{c}
q_{2}-1 \\
r-1%
\end{array}%
\right) ^{2}(q_{1}+q_{2}-2r)!\left( \mathcal{K}_{\ell }(q_{1};r)+\mathcal{K}%
_{\ell }(q_{2};r)\right)\ , \label{juve2}
\end{equation}%
where the last step follows from Lemma \ref{contractions}.

Let us first investigate the case $d=2$. From  \paref{q=2}, \paref{int2} and \paref{q=4d=2}
it is immediate that
\begin{equation}
\Var[Z_{\ell }]=\sum_{q=2}^{Q}\beta _{q}^{2}\Var[h_{\ell ;q}]=
\begin{cases}
O(\ell ^{-1})\ ,\quad &\text{for }\beta _{2}\neq 0 \\
O(\ell ^{-2}\log \ell)\ ,\quad &\text{for }\beta _{2}=0\text{ , }\beta _{4}\neq 0 \\
O(\ell ^{-2})\ ,\quad &\text{otherwise.}%
\end{cases}%
\end{equation}%
Hence we have that for $\beta _{2}\neq 0$
\begin{eqnarray*}
d_{TV}\left(\frac{Z_{\ell }-EZ_{\ell }}{\sqrt{\Var[Z_{\ell }]}},\mathcal{N}(0,1)\right)
=O\left(\frac{\sqrt{\mathcal{K}_{\ell }(2;r)}}{\Var[Z_{\ell }]}\right)
=O\left(\ell ^{-1/2}\right)\text{ ;}
\end{eqnarray*}%
for $\beta _{2}=0$ , $\beta _{4}\neq 0$ ,%
\begin{eqnarray*}
d_{TV}(\frac{Z_{\ell }-EZ_{\ell }}{\sqrt{\Var[Z_{\ell }]}},\mathcal{N}(0,1))
=O(\frac{\sqrt{\mathcal{K}_{\ell }(4;r)}}{\Var[Z_{\ell }]})
=O\left(\frac{1}{\log \ell }\right)
\end{eqnarray*}%
and for $\beta _{2}=\beta _{4}=0$, $\beta _{5}\neq 0$ and $c_5 >0$
\begin{eqnarray*}
d_{TV}\left(\frac{Z_{\ell }-EZ_{\ell }}{\sqrt{\Var[Z_{\ell }]}},\mathcal{N}(0,1)\right)
=O\left(\frac{\sqrt{\mathcal{K}_{\ell }(5;r)}}{\Var[Z_{\ell }]}\right)
=O\left(\frac{\log \ell }{\ell ^{1/4}}\right)\text{ .}
\end{eqnarray*}%
and analogously we deal with the remaining cases, so that we obtain the claimed result for $d=2$.

For $d\ge 3$ from \paref{momento 2} and Proposition \ref{varianza}, it holds
\begin{equation*}
\Var[Z_{\ell }]=\sum_{q=2}^{Q}\beta _{q}^{2}\Var(h_{\ell ;q,d})=
\begin{cases}
O(\ell ^{-d+1})\ ,\quad &\text{for }\beta _{2}\neq 0\ , \\
O(\ell ^{-d})\ ,\quad &\text{otherwise}\ .
\end{cases}%
\end{equation*}
Hence  we have for $\beta _{2}\neq 0$
\begin{eqnarray*}
d_{TV}\left(\frac{Z_{\ell }-EZ_{\ell }}{\sqrt{\Var[Z_{\ell }]}},\mathcal{N}(0,1)\right)
=O\left(\frac{\sqrt{\mathcal{K}_{\ell }(2;r)}}{\Var[Z_{\ell }]}\right) =
O\left(\frac{1}{\ell ^{\frac{d-1}{2}}}\right)\text{ .}
\end{eqnarray*}%
Likewise for $\beta _{2}=0$ , $\beta _{3},c_{3;d}\neq 0$,
\begin{eqnarray*}
d_{TV}\left(\frac{Z_{\ell }-EZ_{\ell }}{\sqrt{\Var[Z_{\ell }]}},\mathcal{N}(0,1)\right)
=O\left(\frac{\sqrt{\mathcal{K}_{\ell }(3;r)}}{\Var[Z_{\ell }]}\right)
=O\left(\frac{1}{\ell ^\frac{d-5}{4}}\right)\text{ %
}
\end{eqnarray*}%
and for $\beta _{2}=\beta _{3}=0$, $\beta _{4}\neq 0$
\begin{eqnarray*}
d_{TV}\left(\frac{Z_{\ell }-EZ_{\ell }}{\sqrt{\Var[Z_{\ell }]}},\mathcal{N}(0,1)\right)
=O\left(\frac{\sqrt{\mathcal{K}_{\ell }(4;r)}}{\Var[Z_{\ell }]}\right)
=O\left(\frac{1}{\ell ^{\frac{d-3}{2}}}\right)\text{ .%
}
\end{eqnarray*}%
Finally if $\beta_2=\beta_3=\beta_4=0$, $\beta_q, c_{q;d} \ne 0$ for some $q$, then
\begin{eqnarray*}
d_{TV}\left(\frac{Z_{\ell }-EZ_{\ell }}{\sqrt{\Var[Z_{\ell }]}},\mathcal{N}(0,1)\right)
=O\left(\frac{\sqrt{\mathcal{K}_{\ell }(q;r)}}{\Var[Z_{\ell }]}\right)
=O\left(\sqrt{\frac{\ell^{2d} }{\ell ^{2d +\frac{d}{2}-\frac{1}{2}}}}\right)=
O\left(\frac{1}{\ell ^{\frac{d-1}{4}}}\right)\text{ .}
\end{eqnarray*}
\end{proof}
\begin{remark}
\label{rem0}\textrm{To compare our result in these specific circumstances
with the general bound obtained by Nourdin and Peccati, we note that for \paref{anvedi},
these
authors are exploiting the inequality%
\begin{equation*}
\displaylines{ \left\Vert g_{\ell;q_{1}}\otimes_{q_{1}} g_{\ell;q_{2}}
\right\Vert_{H^{\otimes (q_2-q_1)}} ^{2}\le \left\Vert g_{\ell;q_{1}}
\right\Vert _{H^{\otimes q_{1}}}^{2}\left\Vert g_{\ell;q_{2}}\otimes_{q_{2}-q_{1}}g_{\ell;q_{2}}
\right\Vert_{H^{\otimes 2q_{1}}}\ , }
\end{equation*}%
see \cite{noupebook}, Lemma $6.2.1$. In the special framework we
consider here (i.e., orthogonal eigenfunctions), this provides, however,  a less efficient bound than
(\ref{efficientbound}): indeed from \paref{anvedi}, repeating the same argument as in Lemma
\ref{contractions},
one obtains
$$\displaylines{
\left\Vert g_{\ell ;q_{1}}\otimes _{q_{1}}g_{\ell ;q_{2}}
\right\Vert_{H^{\otimes (q_2-q_1)}}^{2}=
\int_{({\mathbb{S}}^{d})^4}G_{\ell ;d}(\cos
d(x_{1},x_{2}))^{q_1}G_{\ell ;d}(\cos d(x_{2},x_{3}))^{q_2-q_1}G_{\ell ;d}(\cos
d(x_{3},x_{4}))^{q_1}\,d\underline{x}\le \cr
 \le \int_{({\mathbb{S}}^{d})^2} G_{\ell
;d}(\cos d(x_{1},x_{2}))^{q_1}\,dx_1 dx_2\times \cr
\times \sqrt{\int_{({\mathbb{S}}^{d})^4}G_{\ell ;d}(\cos d(x_{1},x_{2}))^{q_1}
G_{\ell ;d}(\cos
d(x_{2},x_{3}))^{q_2-q_1}G_{\ell ;d}(\cos
d(x_{3},x_{4}))^{q_1}G_{\ell ;d}(\cos d(x_{1},x_{4}))^{q_2-q_1}\,d\underline{x}}=\cr
=O\left(  \Var[h_{\ell ;q_{1},d}] \sqrt{\mathcal{K}_{\ell
}(q_{2},q_{1})}\right) \text{ ,}
}$$
yielding a bound of order
\begin{equation}
O\left( \sqrt{\frac{ \Var[h_{\ell;q_{1},d}] \sqrt{\mathcal{K}%
_{\ell }(q_{2},q_{1})}}{\Var[h_{\ell ;q_{1},d}]^{2}}}\right)
=O\left( \frac{\sqrt[4]{\mathcal{K}_{\ell }(q_{2},q_{1})}}{\sqrt{\Var[h_{\ell
;q_{1},d}]}}\right)  \label{cdip}
\end{equation}%
rather than%
\begin{equation}
O\left( \sqrt{\frac{\mathcal{K}_{\ell }(q_{2},q_{1})}{ \Var[h_{\ell
;q_{1},d}]^{2}}}\right) \text{ ;}  \label{cdip2}
\end{equation}%
for instance, for $d=2$ note that (\ref{cdip}) is typically $=O(\ell
\times \ell ^{-9/8})=O(\ell ^{-1/8}),$ while we have established for (\ref{cdip2}%
) bounds of order $O(\ell ^{-1/4})$. }
\end{remark}

\begin{remark}
\textrm{Clearly the fact that $\left\Vert g_{\ell;q_{1}
}\otimes_{q_{1}}g_{\ell;q_{2} }\right\Vert_{H^{\otimes (q_2-q_1)}}^{2}=0$ for $q_{2}=q_{1}+1$
entails that the contraction $g_{\ell;q_{1} }\otimes_{q_{1}}g_{\ell;q_{2} }$ is identically
null. Indeed repeating the same argument as in Lemma \ref{contractions}
$$\displaylines{
g_{\ell;q_{1} }\otimes_{q_{1}}g_{\ell; q_{1}+1}=\cr
=
\int_{(\cS^{d})^{2}}G_{\ell ;d}(\cos d( x_{1},y) )G_{\ell ;d}(\cos d(
x_{1},x_2) )^{q_1}\,dx_{1}dx_{2}=\cr
=\int_{{\mathbb{S}}^{d}}G_{\ell ;d}(\cos d( x_{1},y) )\,dx_{1}\int_{{%
\mathbb{S}}^{d}}G_{\ell ;d}(\cos d( x_{1},x_2) )^{q_1}\,dx_{2}=0\text{ ,}
}$$
as expected. }
\end{remark}

\section{General nonlinear functionals and excursion sets}

The techniques and results developed in \S $4,5$ are restricted to
finite-order polynomials. \ In the special case of the Wasserstein distance,
we shall show below how they can indeed be extended to general nonlinear
functionals of the form \paref{S}
\begin{equation*}
S_{\ell }(M)=\int_{{\mathbb{S}}^{d}}M(T_{\ell }(x))dx\text{ ;}
\end{equation*}%
here $M:\mathbb{R}\rightarrow \mathbb{R}$ is a measurable function such that
$\mathbb{E}[M(T_{\ell })^{2}]<\infty $ and $J_{2}(M)\neq 0,$ where we recall
that $J_{q}(M):=\mathbb{E}[M(T_{\ell })H_{q}(T_{\ell })]$ .

\begin{remark}
\textrm{Without loss of generality, the first two coefficients $%
J_{0}(M),J_{1}(M)$ can always be taken to be zero in the present framework.
Indeed, $J_{0}(M):=\mathbb{E}[M(T_{\ell })]=0,$ assuming we work with
centred variables and moreover as we noted earlier $h_{\ell ;1,d}=\int_{{%
\mathbb{S}}^{d}}T_{\ell }(x)\,dx=0$. }
\end{remark}
\begin{proof}[Proof Theorem \ref{general}]
As in \cite{MaWi}, from \paref{exp} we write the expansion%
\begin{equation*}
S_{\ell }(M) =\int_{\cS^{d}}\sum_{q=2}^{\infty }\frac{J_{q}(M)H_{q}(T_{\ell
}(x))}{q!}dx\ .
\end{equation*}
Precisely, we write for $d=2$
\begin{eqnarray}\label{sum2}
S_{\ell }(M) =\frac{J_{2}(M)}{2%
}h_{\ell;2,2}+ \frac{J_{3}(M)}{3!}h_{\ell;3,2} + \frac{J_{4}(M)}{4!}h_{\ell;4,2}
+\int_{\cS^{2}}\sum_{q=5}^{\infty }\frac{J_{q}(M)H_{q}(T_{\ell }(x))}{q!%
}dx\text{ ,}
\end{eqnarray}%
whereas for $d\ge 3$
\begin{eqnarray}\label{sum2d}
S_{\ell }(M) =\frac{J_{2}(M)}{2}h_{\ell;2,d}
+\int_{\cS^{d}}\sum_{q=3}^{\infty }\frac{J_{q}(M)H_{q}(T_{\ell }(x))}{q!%
}dx\text{ .}
\end{eqnarray}%
Let us first investigate the case $d=2$.
Set for the sake of simplicity%
\[
S_{\ell }(M;1):=\frac{J_{2}(M)}{2%
}h_{\ell;2,2}+ \frac{J_{3}(M)}{3!}h_{\ell;3,2} + \frac{J_{4}(M)}{4!}h_{\ell;4,2}\text{ ,}
\]%
\[
S_{\ell }(M;2):=\int_{\cS^{2}}\sum_{q=5}^{\infty }\frac{J_{q}(M)H_{q}(T_{\ell
}(x))}{q!}dx\text{ .}
\]%
Hence from \paref{sum2} and the triangular inequality
$$\displaylines{
d_{W}\left( \frac{S_{\ell }(M)}{\sqrt{\Var[S_{\ell }(M)]}},\mathcal{N}%
(0,1)\right)\le\cr
\le d_{W}\left( \frac{S_{\ell }(M)}{\sqrt{\Var[S_{\ell }(M)]}},\frac{%
S_{\ell }(M;1)}{\sqrt{\Var[S_{\ell }(M)]}}\right) +d_{W}\left( \frac{S_{\ell
}(M;1)}{\sqrt{\Var[S_{\ell }(M)]}},\mathcal{N}\left(0,\frac{\Var[S_{\ell }(M;1)]}{%
\Var[S_{\ell }(M)]}\right)\right)+ \cr
+d_{W}\left( \mathcal{N}\left(0,\frac{\Var[S_{\ell }(M;1)]}{\Var[S_{\ell }(M)]}\right),%
\mathcal{N}(0,1)\right)\le \cr
\le \frac{1}{\sqrt{\Var[S_{\ell }(M)]}}\mathbb{E}\left[\left(
\int_{\cS^{2}}\sum_{q=5}^{\infty }\frac{J_{q}(M)H_{q}(T_{\ell }(x))}{q!}dx%
\right)^2\right]^{1/2}+ \cr
+d_{W}\left( \frac{S_{\ell }(M;1)}{\sqrt{\Var[S_{\ell }(M)]}},\mathcal{N}(0,%
\frac{\Var[S_{\ell }(M;1)]}{\Var[S_{\ell }(M)]})\right) +d_{W}\left( \mathcal{N%
}(0,\frac{\Var[S_{\ell }(M;1)]}{\Var[S_{\ell }(M)]}),\mathcal{N}(0,1)\right)
\text{ .}
}$$
Let us bound the first term of the previous summation. Of course
$$\displaylines{
\Var[S_{\ell }(M)] = \Var[S_{\ell }(M;1)] +
\Var[S_{\ell }(M;2)]\ ;
}$$
now we have (see \cite{MaWi})
$$
\Var[S_{\ell }(M;1)] =\frac{J_{2}^{2}(M)}{2}\Var[h_{\ell ;2,2}]+\frac{%
J_{3}^{2}(M)}{6}\Var[h_{\ell ;3,2}]
+\frac{J_{4}^{2}(M)}{4!}\Var[h_{\ell ;4,2}]
$$
and moreover
$$\displaylines{
\Var[S_{\ell }(M;2)]=
\mathbb{E}\left[ \left(\int_{\cS^{2}}\sum_{q=5}^{\infty }\frac{J_{q}(M)H_{q}(T_{\ell
}(x))}{q!}dx \right)^2\right ]=\sum_{q=5}^{\infty }\frac{J_{q}^{2}(M)}{q!}%
\Var[h_{\ell ;q,2}]
\ll \frac{1}{\ell ^{2}}\sum_{q=5}^{\infty }\frac{J_{q}^{2}(M)}{q!}\ll
\frac{1}{\ell ^{2}}\text{ ,}
}$$
where the last bounds follows from \paref{int2} and \paref{cq2}.
Therefore recalling also \paref{q=2} and \paref{q=4d=2}
\[
\frac{1}{\Var[S_{\ell }(M)]}\mathbb{E}\left[ \left(\int_{\cS^{2}}\sum_{q=5}^{\infty }%
\frac{J_{q}(M)H_{q}(T_{\ell }(x))}{q!}dx\right)^2\right]\ll \frac{1}{\ell }%
\text{ .}
\]%
On the other hand, from Theorem \ref{corollario1}%
\[
d_{W}\left( \frac{S_{\ell }(M;1)}{\sqrt{\Var[S_{\ell }(M)]}},\mathcal{N}\left(0,%
\frac{\Var[S_{\ell }(M;1)]}{\Var[S_{\ell }(M)]}\right)\right) =O\left(\frac{1}{\sqrt{\ell
}}\right)
\]%
and finally, using Proposition 3.6.1 in \cite{noupebook},
\begin{eqnarray*}
d_{W}\left( \mathcal{N}\left(0,\frac{\Var[S_{\ell }(M;1)]}{\Var[S_{\ell }(M)]}\right),%
\mathcal{N}(0,1)\right) &\leq &\sqrt{\frac{2}{\pi }}\left|\frac{%
\Var[S_{\ell }(M;1)]}{\Var[S_{\ell }(M)]}-1\right|=O\left(\frac{1}{\ell }\right)\text{ ,}
\end{eqnarray*}%
so that the proof for $d=2$ is completed.

The proof in the general case $d\ge 3$ is indeed
analogous, just setting
\[
S_{\ell }(M;1):=\frac{J_{2}(M)}{2%
}h_{\ell;2,d}\text{ ,}
\]%
\[
S_{\ell }(M;2):=\int_{\cS^{2}}\sum_{q=3}^{\infty }\frac{J_{q}(M)H_{q}(T_{\ell
}(x))}{q!}dx
\]%
and recalling from \paref{momento 2} that $\Var[h_{\ell;2,d}]=O( \frac{1}{\ell^{d-1}})$ whereas for
$q\ge 3$, $\Var[h_{\ell;q,d}]=O( \frac{1}{\ell^{d}})$ from Proposition \ref{varianza}.
\end{proof}
A remarkable special case is obtained for the excursion sets,
which for any fixed $z\in \mathbb{R}$ can be defined as%
\begin{equation*}
S_{\ell }(z):=S_{\ell }(\mathbb{I}(\cdot\le z))=\int_{{\mathbb{S}}^{d}}\mathbb{I%
}(T_{\ell }(x)\le z)dx\text{ ,}
\end{equation*}%
where $\mathbb{I}(\cdot \le z)$ is the indicator function of the interval $%
(-\infty,z]$. Note that ${\mathbb{E}}[S_{\ell }(z)]=\mu _{d}\Phi (z)$, where
$\Phi (z)$ is the cdf of the standard Gaussian law, and in this case we have
$M=M_z:=\mathbb{I}(\cdot \le z)$, $J_{2}(M_z)=z\phi (z)$, $\phi$ denoting
 the standard Gaussian density.
 The following corollary
is then immediate:

\begin{cor}
If $z\ne 0$, as $\ell \rightarrow \infty ,$ we have that%
\begin{equation*}
d_{W}\left( \frac{S_{\ell }(z)-\mu_d \Phi(z)}{\sqrt{\mathrm{Var}[S_{\ell
}(z)]}}, \mathcal{N}(0,1)\right) =O\left(\frac{1}{\sqrt{\ell }}\right)\text{
.}
\end{equation*}
\end{cor}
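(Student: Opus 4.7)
The plan is to obtain the statement as an immediate specialization of Theorem \ref{general} to the nonlinear transform $M = M_z := \mathbb{I}(\cdot \le z)$; following the convention of the paper, we understand the index $\ell$ to range over even multipoles, so $S_\ell(z) = S_{2\ell'}(M_z)$ in the notation of Theorem \ref{general}. The only substantive point is to verify that the assumption $J_2(M_z) \neq 0$ of Theorem \ref{general} is satisfied precisely when $z \neq 0$.

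First I would check the integrability hypothesis of Theorem \ref{general}. Since $M_z$ is bounded and $T_\ell(x)\sim \mathcal{N}(0,1)$ for every $x\in\mathbb{S}^d$, we have $\mathbb{E}[M_z(T_\ell(x))^2] = \Phi(z)<+\infty$, so the $L^2$-expansion \eqref{exp} is valid. By isotropy and Fubini,
\[
\mathbb{E}[S_\ell(z)]=\int_{\mathbb{S}^d}\mathbb{P}(T_\ell(x)\le z)\,dx=\mu_d\Phi(z),
\]
which matches the centering in the stated bound. The second Hermite coefficient is
\[
J_2(M_z)=\mathbb{E}[\mathbb{I}(T_\ell\le z)H_2(T_\ell)]=\int_{-\infty}^{z}(t^2-1)\phi(t)\,dt,
\]
and using the elementary antiderivative $\frac{d}{dt}\bigl[-t\phi(t)\bigr]=(t^2-1)\phi(t)$ together with $t\phi(t)\to 0$ as $t\to-\infty$, one obtains $J_2(M_z)=-z\phi(z)$ (equal up to sign convention to the value $z\phi(z)$ quoted in the text preceding the corollary). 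In particular $J_2(M_z)\neq 0$ iff $z\neq 0$.

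Given that hypothesis, Theorem \ref{general} applied to $M=M_z$ yields
\[
d_W\!\left( \frac{S_{2\ell}(z)-\mu_d\Phi(z)}{\sqrt{\mathrm{Var}[S_{2\ell}(z)]}}, \mathcal{N}(0,1)\right) =O(\ell^{-1/2}),
\]
which is exactly the assertion of the corollary. There is essentially no obstacle: the only non-routine point is to exhibit the Hermite coefficient $J_2(M_z)$, which is a one-line integration by parts; everything else is packaged inside Theorem \ref{general}, whose applicability (in particular the uniform dominance of the $q=2$ chaos component, of variance of order $\ell^{-(d-1)}$, over all higher-order components) has already been established in the preceding section.
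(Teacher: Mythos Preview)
Your proposal is correct and follows exactly the paper's approach: the paper simply states that the corollary is ``immediate'' from Theorem \ref{general} after noting $\mathbb{E}[S_\ell(z)]=\mu_d\Phi(z)$ and $J_2(M_z)=z\phi(z)$. Your computation $J_2(M_z)=-z\phi(z)$ is in fact the correct sign (the paper's is a typo), but of course only nonvanishing matters for the application of Theorem \ref{general}.
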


\begin{remark}
\textrm{It should be noted that the rate obtained here is much sharper than
the one provided by \cite{pham} for the Euclidean case with $d=2$. The
asymptotic setting we consider is rather different from his, in that we
consider the case of spherical eigenfunction with diverging eigenvalues,
whereas he focusses on functionals evaluated on increasing domains $%
[0,T]^{d} $ for $T\rightarrow \infty .$ However the contrast in the
converging rates is not due to these different settings, indeed \cite{cammar}
establish rates of convergence analogous to those by \cite{pham} for
spherical random fields with more rapidly decaying covariance structure than
the one we are considering here. The main point to notice is that the slow
decay of Gegenbauer polynomials entails some form of long range dependent
behaviour on random spherical harmonics; in this sense, hence, our results
may be closer in spirit to the work by \cite{dehlingtaqqu} on empirical
processes for long range dependent stationary processes on $\mathbb{R}$. }
\end{remark}

Department of Mathematics, University of Rome Tor Vergata, Via della Ricerca
Scientifica, 00133 Roma, Italy \smallskip

\texttt{marinucc@mat.uniroma2.it} \smallskip

\texttt{rossim@mat.uniroma2.it}

\end{document}